\renewcommand{\tocsection}[3]{%
  \indentlabel{\@ifnotempty{#2}{\bfseries\ignorespaces#1 #2\quad}}\bfseries#3}
\renewcommand{\tocsubsection}[3]{%
  \indentlabel{\@ifnotempty{#2}{\ignorespaces#1 #2\quad}}#3}
\newcommand\@dotsep{4.5}
\def\@tocline#1#2#3#4#5#6#7{\relax
  \ifnum #1>\c@tocdepth 
  \else
    \par \addpenalty\@secpenalty\addvspace{#2}%
    \begingroup \hyphenpenalty\@M
    \@ifempty{#4}{%
      \@tempdima\csname r@tocindent\number#1\endcsname\relax
    }{%
      \@tempdima#4\relax
    }%
    \parindent\z@ \leftskip#3\relax \advance\leftskip\@tempdima\relax
    \rightskip\@pnumwidth plus1em \parfillskip-\@pnumwidth
    #5\leavevmode\hskip-\@tempdima{#6}\nobreak
    \leaders\hbox{$\m@th\mkern \@dotsep mu\hbox{.}\mkern \@dotsep mu$}\hfill
    \nobreak
    \hbox to\@pnumwidth{\@tocpagenum{\ifnum#1=1\bfseries\fi#7}}\par
    \nobreak
    \endgroup
  \fi}
\renewcommand\csname r@tocindent0\endcsname{0pt}
\def\l@subsection{\@tocline{2}{0pt}{2.5pc}{5pc}{}}
\newtheorem{thm}{Theorem}
\newtheorem{cor}{Corollary}
\theoremstyle{definition}
\newtheorem{defi}{Definition}
\newtheorem{example}{Example}
\newtheorem{rmk}{Remark}
\DeclareMathOperator{\Sym}{Sym}
\newcommand{\BB}[2]{\boldsymbol{B}_{#1,#2}}
\newcommand{\rBB}[3]{\widehat{\boldsymbol{B}}_{#1,#2,#3}}
\newcommand{\ulmbd}{\underline{\lambda}}
\title{Two Formulas for the Number of Lines on Complex Projective Hypersurfaces}
\author{Javier Álvarez-Vizoso}
\address{Max Planck Institute for Gravitational Physics (Albert Einstein Institute),\newline
Callinstr. 38, 30167 Hannover, Germany}
\email{javizoso@alumni.colostate.edu}
\date{}
\begin{document}

\begin{abstract}
Two formulas for the classical number $C_n$ of lines on a generic hypersurface of degree $2n-3$ in $\mathbb{CP}^n$ are obtained which differ from the formulas by Dominici, Harris, Libgober, and van der Waerden-Zagier. We review the splitting principle computation by Harris obtaining a similar general closed-form formula in terms of the Catalan numbers and elementary symmetric polynomials. This in turn yields $C_n$ as a linear difference recursion relation of unbounded order. Thus, for the sequence of certain linear combinations of $C_n$, a simple generating function is found. Then, a result from random algebraic geometry by Basu, Lerario, Lundberg, and Peterson, that expresses these classical enumerative invariants as proportional to the Bombieri norm of particular polynomial determinants, yields another combinatorial expansion in terms of certain set compositions and block labeling counting. As an example, we compute this combinatorial interpretation for the cases of 27 lines on a cubic surface and 2875 lines on a quintic threefold. As an application, we reobtain the parity and asymptotic upper bound of the sequence. In an appendix, we generalize the splitting principle calculation to obtain a formula for the number of lines on a generic complete intersection.
\end{abstract}

\maketitle

\Small
\tableofcontents
\normalsize


\section{Introduction}

Enumerative geometry \cite{32642016} deals with the problem of counting the finite number of solutions to special geometric configurations, usually posed in terms of intersection theory and moduli spaces. 
By a dimensional argument on intersections in the moduli space of projective lines, it is expected that the number of lines contained on a generic complex hypersurface of degree $2n-3$ in projective space $\mathbb{CP}^n$ is a finite number. This is because the Grassmannian variety parametrizing lines in $\mathbb{CP}^n$ has dimension $2(n-1)$ and there are $d+1$ conditions for a line to be contained in a general hypersurface, so when $d+1=2(n-1)$ a zero-dimensional intersection is expected, i.e., a finite number of points in the moduli space of lines. Obtaining this sequence of numbers can be regarded as one of the major driving forces in the development of enumerative geometry. It is a classical result by A. Cayley and G. Salmon in 1849 that there are 27 lines on a cubic surface in projective three-dimensional space. The number 2875 for the corresponding case of the quintic three-fold in $\mathbb{CP}^4$ has a long history, starting with H.  Schubert's computation in 1879, and recently connected to theoretical physics via the prediction of this and other invariants, such as the number of rational curves, from mirror symmetry \cite{CANDELAS199121} and quantum cohomology \cite{Kontsevich95,Kontsevich1994}. These enumerative invariants define the sequence $C_n$ for $n\geq 2$:
\begin{equation}
 \{ 1, 27, 2875, 698005, 305093061, 210480374951, 210776836330775,\dots\}.
\end{equation}


The classical computation is carried out intersecting Schubert cycles, and the modern form involves employing the splitting principle to calculate the degree of the top Chern class (Euler class) of the $(2n-3)$-th symmetric product of the dual universal subbundle $S^*$ on the Grassmannian of projective lines $\mathbb{G}r(1,n)$, see \cite[ch. 6]{32642016}:
\begin{equation}\label{eq:Chern}
	C_n = \int_{\mathbb{G}r(1,n)} c_{2n-2}(\Sym^{2n-3}(S^\ast)).
\end{equation}
By using a representation of the Chow ring of $\mathbb{G}r(1,n)$ with Gröbner bases, this can be automatized and computational algebra packages such as \texttt{Schubert2} for the software \texttt{Macaulay2} can readily produce the values of the sequence by symbolic calculations.   

Using Atiyah-Bott localization, it can be shown that the integral of the top Chern class is a rational function of arbitrary complex variables that produces the sequence of numbers. D. Zagier \cite{grunberg2006arxiv} has proved that this is indeed independent of the variables by finding an alternative expression of the sequence as the coefficients of $x^{n-1}$ in a polynomial product in one variable:
\begin{equation}\label{eq:Zagier1}
	C_n = \left[ (1-x)\prod_{k=0}^{2n-3}(2n-3-k-kx) \right]_{x^{n-1}}
\end{equation}
which was already proved using classical Schubert calculus by B. L. van der Waerden \cite{Waerden33} in 1933.
This leads to one of the simplest closed-form formulas known for the sequence, in terms of the unsigned Stirling numbers of the first kind:
\begin{equation}\label{eq:Zagier2}
	C_n = \sum_{m=0}^{n-1}(-1)^{n-1-m}\binom{2n-2-m}{n-1}(2n-3)^{m+1}{ 2n-3\brack m}.
\end{equation}
D. Grünberg and P. Moree \cite{Grnberg2006SequencesOE} used this formula to study the arithmetic congruence properties of these invariants, e.g., they are all odd integers, as well as their asymptotic behavior at large $n$. However, much remains unknown about the sequence in terms of recurrence relations or generating functions.

Another closed-form formula had been discovered by A.S. Libgober in 1973, \cite{Libgober1973}, using intersection theory on the Grassmannian to determine the class of the Fano variety of lines as a combination of Schubert cycles without employing Chern classes:
\begin{equation}\label{eq:libgober}
C_n =	(2n-3)(2n-3)![e_{n-2}(L_{2n-4})- e_{n-3}(L_{2n-4})] \text{ with } L_{2n-4}=\left\{\frac{2n-3-j}{j}\right\}_{j=1}^{2n-4},
\end{equation}
where the $e_t(\cdot)$ are elementary symmetric polynomials. Libgober establishes a more general formula for the number of lines on generic complete intersections of any fixed codimension, which is again related to mirror symmetry by being the first term of the sequence of the expected number of rational curves, \cite{Libgober1993}.
P. Dominici states a very similar formula in \cite{Dominici98}, (see \cite[Th. 4.26]{Grnberg2006SequencesOE} for a proof):
\begin{equation}\label{eq:Dominici} 
	C_n = (2n-3)^2(2n-4)![ e_{n-2}(Y_{2n-4}) - e_{n-1}(Y_{2n-4}) ], \text{ with } Y_{2n-4}=\left\{\frac{j}{2n-3-j}\right\}_{j=1}^{2n-4}.
\end{equation}
For a general review of results on the Fano scheme of lines see \cite{ciliberto2020lines}.

Finally, using the splitting principle with Chern classes, J. Harris obtains in 1979, \cite{Harris79}, the following expression in terms of the Catalan numbers:
\begin{equation}\label{eq:Harris}
C_n = (2n-3)(2n-3)!\sum_{k=0}^{n-3}\frac{(2k)!}{k!(k+1)!}\sum_{\substack{I\subset\{1,2,\dots,n-2\}\\ |I|=n-2-k }}\prod_{i\in I}\frac{(2n-3-2i)^2}{i(2n-3-i)}.
\end{equation}

The first aim of the present work is to review this general calculation in Schubert calculus using the splitting principle with Chern classes for arbitrary $n$. We thus obtain a similar formula that yields the number of lines on hypersurfaces in terms of the Catalan numbers and the elementary symmetric polynomials evaluated at the numbers $\left\{\frac{(2n-3-j)j}{(2n-3)^2}\right\}_{j=1}^{n-2}$. This in turn yields the numbers $C_n$ as an non-homogeneous linear combination of their predecessors, $C_m, 2\leq m\leq n-1$, which is sometimes called linear difference equation of unbounded order, \cite{MALLIK199879}, and can be regarded as a generalized linear recurrence relation with variable coefficients. Inverting the linear dependencies, one obtains a sequence of linear combinations of $C_n$ which has a simple generating function with closed-form formula, related to the one generating the Catalan numbers via a generalized binomial transformation. Equating this expression, or Harris', to the one by van der Waerden-Zagier trivially establishes a relationship between the Catalan numbers and the unsigned Stirling numbers of the first kind. In an appendix, we generalize this calculation to obtain a formula for the number of lines on a generic complete intersection of arbitrary fixed codimension.

Our second result is a new formula that offers an alternative combinatorial expansion in terms of factors summing over set compositions and block labelling counting. It is deduced from random algebraic geometry, a subject initiated by Edelman, Kostlan, Shub and Smale \cite{edelman1995many, edelman1994many,Kostlan2002OnTE,Shub1993b,Shub1993a,Shub1993c}, and developed, e.g., by Bürgisser and Lerario \cite{BrgisserLerario2020}. It uses probabilistic methods to study geometric problems that can be characterized by random polynomials. In particular, S. Basu, A. Lerario, E. Lundberg, and C. Peterson \cite{Basu2019} find a fundamental formula for the number of lines that shows it is proportional to the statistical expected value of the absolute value of certain determinants arising from random matrices. In the complex case, this expected value can be interpreted as the Bombieri norm of such determinants. We directly calculate the Bombieri norm of these polynomials for arbitrary dimension and interpret the result in terms of the combinatorics of certain blocks and set compositions. 

As an example, we explicitly compute the combinatorial expansion for the 27 lines on a cubic and 2875 lines on a quintic, using only the combinatorics of block cycle labelling and composition counting. As an application, besides the linear difference recurrence relations and the generating function of the linear combination of $C_n$, which are new results, we straightforwardly reobtain the known facts that the $C_n$ are odd numbers, and get an upper bound of the same order as the known asymptotic limit $\log C_n\sim 2n\log(n)+O(n),\;\text{ for }n\rightarrow\infty,$ using elementary methods. These properties, and many more, are known from van der Waerden-Zagier expression using congruences and more sophisticated techniques, like residue calculus on the integral representation of $C_n$ or the $\text{asymp}_k$ trick, cf. \cite{grunberg2006arxiv,Grnberg2006SequencesOE}.
Six closed-form formulas thus exist which may provide complementary approaches to derive results about the structure of this important enumerative geometry sequence, from arithmetics (congruences), and combinatorics (recursion relations), to analysis (generating functions).

The paper is organized as follows: in Section \ref{sec:main} we introduce the notation and state the two main theorems, giving examples of the combinatorial objects involved and deriving the parity and asymptotic upper bound of the sequence. Also, the variable coefficient recursion for $C_n$ is stated and the generating function of linear combinations of $C_n$ is derived. In Section \ref{sec:examples} we explicitly compute the combinatorial expansion for the cases of cubic surfaces and quintic threefolds, which reflect the counting of certain monomials in determinants arising in random algebraic geometry. 
In Section \ref{sec:Schubert} we prove the Schubert calculus formula using intersection theory via Chern classes, the splitting principle for arbitrary dimension, and basic properties of the Schubert cycles involved.
In Section \ref{sec:Bombieri} we prove the combinatorial expansion formula arising from random algebraic geometry by explicitly computing the Bombieri norm of certain polynomial determinants that arise in the Basu-Lerario-Lundberg-Peterson theorem, where we see that the monomial counting involved is directly related to the combinatorics of certain set compositions and blocks labeling. 
Finally, we conclude with a brief discussion of the results. In addition, in Appendix \ref{sec:append} we generalize the Schubert calculus computation with the splitting principle to obtain a formula for the number of lines on complete intersections.
%
%
\section{Main statements}\label{sec:main}


\subsection{Schubert calculus formula}

Let $S^\ast$ be the dual to the universal subbundle $S$ of the Grassmannian $Gr(2,n+1)=\mathbb{G}r(1,n)$ of projective lines in $\mathbb{CP}^n$. The universal subbundle has as fiber at a point $[\Lambda]\in G(2,n+1)$ the space $\Lambda$ itself, and from its dual the equation of a line can be given as a section of $\Gamma(\mathbb{G}r(1,n),S^\ast)$. 
The defining equation of a hypersurface of degree $2n-3$, being a homogeneous polynomial, can be specified as a generic section $s\in\Gamma(\mathbb{G}r(1,n),\Sym^{2n-3}(S^\ast))$. Thus, a line $L\in\mathbb{CP}^n$ is a subvariety of the hypersurface if and only if the section $s$ vanishes on $[L]\in\mathbb{G}r(1,n)$. The number of points of this expected zero-dimensional scheme is precisely the degree of the top Chern class of the bundle, the Euler class, i.e., $\deg c_{2n-2}(\Sym^{2n-3}(S^\ast))$, see \cite[ch. 6]{32642016}.

Our first result aims to review this modern Schubert calculus computation using the splitting principle, as already done by Harris in \cite{Harris79}, by obtaining a formula similar to Equation \eqref{eq:Harris}.
\begin{thm}\label{th:SchubertCn}
	The number $C_n$ of lines on a generic complex projective hypersurface of degree $2n-3$ in $
	\mathbb{CP}^n$ is 
\begin{equation}\label{eq:SchubertCn}
\boxed{
	C_n = (2n-3)^{2n-2}\sum_{m=0}^{n-2}e_m(\Gamma_{n-2})(-1)^m(2m+1)K_m 
	}
\end{equation}
where $K_m=\binom{2m}{m}\frac{1}{m+1}$ are the Catalan numbers, and $e_m(\Gamma_{n-2})$ are the elementary symmetric polynomials (with $e_0=1$) evaluated at the set of numbers $\Gamma_{n-2}=\{\frac{(2n-3-j)j}{(2n-3)^2}\}_{j=1}^{n-2}$.
\end{thm}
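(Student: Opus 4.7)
The plan is to start from the Euler class integral $C_n = \int_{\mathbb{G}r(1,n)} c_{2n-2}(\Sym^{2n-3}(S^\ast))$ of \eqref{eq:Chern} and reduce it by the splitting principle. If $\alpha,\beta$ denote the formal Chern roots of $S^\ast$, so that $\sigma_1 = \alpha+\beta$ and $\sigma_{1,1} = \alpha\beta$ are the usual Schubert generators of $\mathbb{G}r(1,n)$, then setting $d = 2n-3$ the Chern roots of $\Sym^d(S^\ast)$ are the $d+1 = 2n-2$ linear forms $(d-k)\alpha + k\beta$ for $k = 0,\dots,d$, and the top Chern class is their product. First I would isolate the two extreme factors $k=0$ and $k=d$, which yield $(d\alpha)(d\beta) = d^2\sigma_{1,1}$, and then pair the remaining $2n-4$ factors as $\{k, d-k\}$ for $k = 1,\dots,n-2$.

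The key algebraic simplification is that each paired product collapses to
\[
((d-k)\alpha+k\beta)(k\alpha+(d-k)\beta) = k(d-k)(\alpha-\beta)^2 + d^2\alpha\beta = k(d-k)(\sigma_1^2-4\sigma_{1,1}) + d^2\sigma_{1,1},
\]
by direct expansion together with the identity $(\alpha-\beta)^2 = \sigma_1^2 - 4\sigma_{1,1}$. Factoring $d^2$ out of each pair introduces exactly the normalized numbers $\gamma_k = k(d-k)/d^2$ of $\Gamma_{n-2}$, giving
\[
c_{2n-2}(\Sym^d(S^\ast)) = d^{2n-2}\,\sigma_{1,1}\prod_{k=1}^{n-2}\bigl[\gamma_k(\sigma_1^2-4\sigma_{1,1}) + \sigma_{1,1}\bigr].
\]
Expanding this product over subsets $J\subseteq\{1,\dots,n-2\}$ and grouping by $|J|=m$ yields
\[
C_n = d^{2n-2}\sum_{m=0}^{n-2} e_m(\Gamma_{n-2})\int_{\mathbb{G}r(1,n)}\sigma_{1,1}^{n-1-m}(\sigma_1^2-4\sigma_{1,1})^m.
\]

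To finish, I need to identify each integral $I_m := \int\sigma_{1,1}^{n-1-m}(\sigma_1^2-4\sigma_{1,1})^m$ with $(-1)^m(2m+1)K_m$. Since $\sigma_{1,1}^{n-1-j} = \sigma_{n-1-j,n-1-j}$ by repeated Pieri, and in the Schur expansion $\sigma_1^{2j} = \sum_{a+b=2j,\,a\geq b} f^{(a,b)}\sigma_{a,b}$ only the $(j,j)$ summand pairs with $\sigma_{n-1-j,n-1-j}$ to yield the fundamental class $\sigma_{n-1,n-1}$, the hook-length formula gives the classical identity $\int\sigma_1^{2j}\sigma_{1,1}^{n-1-j} = f^{(j,j)} = K_j$. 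Binomial-expanding $(\sigma_1^2-4\sigma_{1,1})^m$ then produces
\[
I_m = \sum_{j=0}^{m}\binom{m}{j}(-4)^{m-j}K_j,
\]
and I would close the argument by proving the Catalan identity $\sum_{j=0}^{m}\binom{m}{j}(-4)^{m-j}K_j = (-1)^m(2m+1)K_m$, most transparently via the generating functions $C(x) = \sum_m K_m x^m = (1-\sqrt{1-4x})/(2x)$ and $\sum_m(2m+1)K_m x^m = C(x)/\sqrt{1-4x}$, which reduces the claim to the formal substitution identity $C(-x)\sqrt{1+4x} = C(x/(1+4x))$ verifiable by direct computation.

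The hard part is this final step: finding a clean proof of the Catalan--binomial identity above. The pairing of factors and the identification of $\int \sigma_1^{2j}\sigma_{1,1}^{n-1-j}$ with the Catalan number $K_j$ (the number of standard Young tableaux of the rectangular shape $(j,j)$) are both essentially classical, so the entire subtlety of the theorem is concentrated in the closed-form evaluation of this weighted Catalan sum, which is exactly what converts the subset expansion into the elegant $(-1)^m(2m+1)K_m$ weights appearing in \eqref{eq:SchubertCn}.
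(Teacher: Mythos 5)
Your proposal is correct and follows essentially the same route as the paper: split the top Chern class into the paired factors $\{k,2n-3-k\}$, expand over subsets to produce the elementary symmetric polynomials $e_m(\Gamma_{n-2})$, evaluate $\int\sigma_1^{2j}\sigma_{1,1}^{n-1-j}=K_j$, and close with the binomial-transform identity $\sum_{j}\binom{m}{j}(-4)^{m-j}K_j=(-1)^m(2m+1)K_m$ via the generating function $\frac{1}{1+4x}C\!\left(\frac{x}{1+4x}\right)=\frac{1}{2x}\left(1-\frac{1}{\sqrt{1+4x}}\right)$, which is exactly the paper's final step. Your one organizational improvement is writing each paired factor directly as $k(d-k)(\sigma_1^2-4\sigma_{1,1})+d^2\sigma_{1,1}$, which lands on the paper's intermediate formula \eqref{eq:semifinalCn} immediately and bypasses its detour through the coefficient-extraction manipulation and the identity \eqref{eq:twobinomials}.
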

By equating this to van der Waerden-Zagier's formula \eqref{eq:Zagier2}, one trivially obtains a relationship between the Catalan numbers and the unsigned Stirling numbers of the first kind.

The sum over $m$ in the previous equation can be read as an inner product between a row of an infinite lower triangular matrix and an infinite vector, for which sequentially solving leads to an expression of every number $C_n$ in terms of its predecessors in the sequence. We shall prove the following two corollaries at the end of the main proof in Section \ref{sec:Schubert}.

\begin{cor}\label{cor:recursion}
Every number $C_n$ is given in terms of its predecessors in the sequence as:
\begin{align}
C_2 & = 1, \nonumber \\
C_3 & = 81\, C_2 -54,  \nonumber\\
C_4 & = \frac{3125}{9}\, C_3 - 12500\, C_2 + 6000,  \nonumber\\
C_5 & = \frac{50421}{50}\, C_4 - \frac{3008453}{18}\, C_3 +3546277\, C_2 - 1234800, \nonumber\\
C_6 & = \frac{554769}{245}\, C_5 - \frac{140610978}{125}\, C_4 +\frac{598375026}{5}\, C_3 -\frac{8420611932}{5}\, C_2+411505920, \nonumber\\
\dots \nonumber\\
C_n & = \sum_{k=2}^{n-1}B_{n,k}C_k + F_n, \label{eq:recursion}
\end{align}
where the coefficients $B_{n,k}$ are recursively determined by:
\begin{equation}\label{eq:recursioncoeff}
	B_{n,k} = \frac{\alpha_{n,k-2}}{\alpha_{k,k-2}} - \sum_{q=1}^{n-k-1} \frac{\alpha_{n,k-2+q}}{\alpha_{k+q,k-2+q}}\, B_{k+q,k},
\end{equation}
with $A=[A_{ij}]=[\alpha_{i+2,j}]$, (indexed as $i,j\in\{0,1,2,\dots\}$), defining an infinite lower triangular matrix with nonzero entries:
\begin{equation}\label{eq:lowertriangmatrix}
	\alpha_{n,k} = (2n-3)^{2n-2}e_k(\Gamma_{n-2}),\quad n=2,3,\dots,\; k=0,1,\dots,n-2.
\end{equation}
The inhomogeneous term $F_n$ is
\begin{equation}\label{eq:Fn}
	F_n = \alpha_{n,n-2}(-1)^{n-2}(2n-3)K_{n-2} 
\end{equation}
\end{cor}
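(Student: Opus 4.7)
The plan is to treat equation \eqref{eq:SchubertCn} as a triangular linear system in disguise and to invert it one row at a time. Writing $v_m := (-1)^m(2m+1)K_m$ for the ``Catalan vector,'' Theorem \ref{th:SchubertCn} reads $C_n = \sum_{m=0}^{n-2}\alpha_{n,m}v_m$ for every $n\geq 2$, which is the matrix--vector identity $C=Av$. The first observation to record is that the inhomogeneous term in the corollary is exactly the diagonal contribution of this system,
$$
F_n \;=\; \alpha_{n,n-2}\,v_{n-2},
$$
since $v_{n-2}=(-1)^{n-2}(2n-3)K_{n-2}$ matches the formula \eqref{eq:Fn}. This identification also shows $\alpha_{n,n-2}\neq 0$ for every $n\geq 2$ (it is a positive rational multiple of a Catalan number), so $A$ is invertible with lower triangular inverse, and in particular the quotients $\alpha_{n,k-2}/\alpha_{k,k-2}$ that appear in \eqref{eq:recursioncoeff} are well defined.

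Solving the $k$-th equation for $v_{k-2}$ yields $v_{k-2}=F_k/\alpha_{k,k-2}$ for every $2\leq k\leq n$, and substituting these values into the $n$-th equation produces the preliminary identity
$$
C_n \;=\; F_n \;+\; \sum_{k=2}^{n-1}\frac{\alpha_{n,k-2}}{\alpha_{k,k-2}}\,F_k.
$$
To convert the right-hand side into a linear combination of the $C_k$ themselves, I would proceed by strong induction on $n$. The base case $n=2$ reduces to $C_2=F_2=1$. For the inductive step I assume the corollary for every index smaller than $n$, so that $F_k = C_k - \sum_{j=2}^{k-1}B_{k,j}C_j$ for each $k<n$, and substitute into the above identity. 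Collecting the coefficient of $C_k$ (for $2\leq k\leq n-1$) yields one contribution $\alpha_{n,k-2}/\alpha_{k,k-2}$ from the $k$-th term of the outer sum and, for every $q$ with $1\leq q\leq n-k-1$, a contribution $-\tfrac{\alpha_{n,k+q-2}}{\alpha_{k+q,k+q-2}}B_{k+q,k}$ arising from the $C_k$-part of the already-expanded $F_{k+q}$. Their sum is exactly \eqref{eq:recursioncoeff}, closing the induction.

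The main obstacle I anticipate is purely notational: carrying the index shift $k\leftrightarrow k-2$ consistently between the matrix indexing of $\alpha$ and the ``$v$'' indexing, and reordering the nested double sum cleanly. No new enumerative or geometric input is required; the entire content of the corollary is the triangular inversion of the single linear identity provided by Theorem \ref{th:SchubertCn}. As a sanity check one may, once the general recursion is in place, unfold it for $n=3,4,5$ using $\Gamma_{n-2}$ explicitly, which reproduces the numerical coefficients $81,\ \tfrac{3125}{9},\ -12500,\ \ldots$ printed in the statement.
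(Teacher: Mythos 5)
Your proposal is correct and follows essentially the same route as the paper: both invert the lower triangular system $C=Av$ coming from Theorem \ref{th:SchubertCn}, with $F_n$ identified as the diagonal term $\alpha_{n,n-2}v_{n-2}$. Your intermediate identity $C_n=F_n+\sum_{k=2}^{n-1}\frac{\alpha_{n,k-2}}{\alpha_{k,k-2}}F_k$ together with the strong induction on $n$ is just a cleaner formalization of the paper's ``collecting terms and keeping track of the indices'' step, and it reproduces \eqref{eq:recursioncoeff} exactly.
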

Even though a generating function for $C_n$ remains unknown, the linear relations involved in Equation \eqref{eq:SchubertCn} can be inverted to define a generating function of linear combinations of $C_n$ that does have a simple closed-form expression.
\begin{cor}\label{cor:generatingfunction}
Let $A^{-1}=[\theta_{n,k}]$ be the infinite lower triangular inverse matrix of \eqref{eq:lowertriangmatrix}. Define the generating function
\begin{equation}\label{eq:formalseries}
	Z(x) = \sum_{n=0}^{\infty}\left( \sum_{k=0}^{n} \theta_{n,k}C_{k+2} \right) x^n,
\end{equation}
then it has the closed-form formula:
\begin{equation}\label{eq:generatingfunction}
	Z(x) = \frac{1}{2x}\left(1-\frac{1}{\sqrt{1+4x}} \right).
\end{equation}
In particular,
\begin{align*}
Z(x) = & C_2 + \left(-\frac{9}{2}C_2 +\frac{1}{18}C_3\right)x + \left(  \frac{125}{6}C_2-\frac{125}{216}C_3+\frac{1}{600}C_4 \right)x^2 \\ & + \left( -\frac{72373}{720}C_2+\frac{61397}{12960}C_3-\frac{343}{12000}C_4+\frac{1}{35280}C_5 \right)x^3 \\ 
& + \left( \frac{2887727}{5600}C_2-\frac{3693673}{100800}C_3+\frac{96441}{280000}C_4-\frac{761}{1097600}C_5+\frac{1}{3265920}C_6 \right)x^4 +\dots \\
& = 1 -3x + 10x^2 -35x^3 + 126x^4 -462x^5+ 1716x^6 -6435 x^7+ 24310x^8 + \dots
\end{align*}
\end{cor}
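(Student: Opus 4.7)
The plan is to note that Theorem \ref{th:SchubertCn} already expresses the sequence $\{C_n\}$ as the image of the Catalan-weighted vector $v_j := (-1)^j(2j+1)K_j$ under the lower triangular operator $A$, so that inverting $A$ collapses $Z(x)$ into a single scalar generating function which can then be summed in closed form from the Catalan series.

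First I would rewrite Theorem \ref{th:SchubertCn} in matrix form. With $i=n-2$, identity \eqref{eq:SchubertCn} reads
\begin{equation*}
C_{i+2} \;=\; \sum_{j=0}^{i} A_{ij}\,v_{j}, \qquad A_{ij}=\alpha_{i+2,j}, \quad v_{j}=(-1)^{j}(2j+1)K_{j},
\end{equation*}
with $\alpha_{n,k}$ as in \eqref{eq:lowertriangmatrix}. The diagonal entry $A_{ii}=(2i+1)^{2i+2}\,e_{i}(\Gamma_{i})$ is a product of strictly positive rationals, so the diagonal of $A$ never vanishes. Consequently $A$ is invertible as a formal lower triangular infinite matrix, its inverse $A^{-1}=[\theta_{n,k}]$ is again lower triangular, and multiplying the displayed identity by $A^{-1}$ would give
\begin{equation*}
\sum_{k=0}^{n}\theta_{n,k}\,C_{k+2} \;=\; (-1)^{n}(2n+1)K_{n}.
\end{equation*}
Substituting into \eqref{eq:formalseries} then identifies $Z(x) = \sum_{n\geq 0}(-1)^{n}(2n+1)K_{n}\,x^{n}$.

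Next I would evaluate this series. Writing $\mathcal{K}(x)=\sum_{n\geq 0}K_n x^n=(1-\sqrt{1-4x})/(2x)$ for the Catalan generating function, one has $\sum_{n\geq 0}(2n+1)K_n x^n = 2x\mathcal{K}'(x)+\mathcal{K}(x)$; using the explicit radical form of $\mathcal{K}(x)$ (equivalently the functional equation $\mathcal{K}(x)=1+x\mathcal{K}(x)^{2}$), a short algebraic simplification gives
\begin{equation*}
\sum_{n=0}^{\infty}(2n+1)K_{n}\,x^{n} \;=\; \frac{1}{2x}\left(\frac{1}{\sqrt{1-4x}}-1\right).
\end{equation*}
Replacing $x$ by $-x$ then yields exactly the closed form \eqref{eq:generatingfunction}, and the explicit expansion listed in the statement follows by expanding $(1+4x)^{-1/2}$ via the binomial series.

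The main obstacle is entirely routine: one must check that the diagonal of $A$ never vanishes so that $A^{-1}$ is well defined, and then carry out the radical algebra collapsing $2x\mathcal{K}'(x)+\mathcal{K}(x)$ into its compact form. No structural input beyond Theorem \ref{th:SchubertCn} and the standard Catalan generating function is required; in particular, the explicit entries of $A^{-1}$ never need to be computed.
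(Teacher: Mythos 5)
Your proposal is correct and follows essentially the same route as the paper: both invert the lower triangular matrix $A$ from Theorem \ref{th:SchubertCn} to identify $\sum_{k}\theta_{n,k}C_{k+2}$ with $(-1)^n(2n+1)K_n$, and then evaluate the resulting Catalan series in closed form. The only (inessential) difference is the last step: you compute $2x\mathcal{K}'(x)+\mathcal{K}(x)$ and substitute $x\mapsto -x$, whereas the paper reaches the same radical expression via the generalized binomial transform identity $Z(x)=\frac{1}{1+4x}\mathcal{K}\!\left(\frac{x}{1+4x}\right)$ applied to the intermediate coefficients $u_t=\sum_{m}\binom{t}{m}(-4)^{t-m}K_m$ arising in its proof of Theorem \ref{th:SchubertCn}.
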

%
%
%
\begin{rmk}
	The coefficients $\theta_{n,k}$ are computable exactly: any block of size $N\times N$ of $A$ can be written as $D(I+T)$ with $D$ diagonal and $T$ strictly lower triangular, so the inverse is $(I+\sum_{q=1}^{N-1}(-1)^{q}T^q)D^{-1}$, which is expressible in terms of $\alpha_{n,k}$. 
\end{rmk}
\begin{cor}
	Every number $C_n$ is an odd integer. 
\end{cor}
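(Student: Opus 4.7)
The plan is to read parity information directly off the Schubert calculus formula \eqref{eq:SchubertCn}. Setting $N:=2n-3$ and using the identity
\begin{equation*}
(2n-3)^{2n-2}\,e_m(\Gamma_{n-2}) \;=\; N^{2(n-1-m)}\,S_m, \qquad S_m := \sum_{\substack{I\subseteq\{1,\dots,n-2\}\\ |I|=m}} \prod_{j\in I}(N-j)\,j,
\end{equation*}
the theorem rewrites as $C_n=\sum_{m=0}^{n-2}T_m$, where $T_m := N^{2(n-1-m)}\,S_m\,(-1)^m(2m+1)K_m$. Since $N$, $S_m$, $2m+1$, and the Catalan numbers $K_m$ are all integers, each summand $T_m$ is an integer, so the congruence argument below is legal.

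The main observation is that $N=2n-3$ is odd for every $n\ge 2$, hence $j$ and $N-j$ have opposite parities for every $j\in\{1,\dots,n-2\}$. Consequently $(N-j)j$ is even, every nonempty product inside $S_m$ is even, and therefore $S_m\equiv 0\pmod 2$ whenever $m\ge 1$. Because $N^{2(n-1-m)}$ is odd, this forces $T_m\equiv 0\pmod 2$ for all $m\ge 1$, regardless of the sign and of the value of $(2m+1)K_m$.

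It remains to isolate the $m=0$ contribution. With the empty-product conventions $S_0=1$ and $K_0=1$, we get $T_0=N^{2n-2}$, which is odd. Summing the unique odd term $T_0$ with the remaining even terms gives $C_n\equiv 1\pmod 2$. The only genuinely delicate step in the plan is the initial rewriting that turns the summands into manifest integers; once Theorem \ref{th:SchubertCn} has been cast in the integral form above, the argument reduces to the trivial parity of $j(N-j)$ and presents no real obstacle.
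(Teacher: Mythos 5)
Your proof is correct and follows essentially the same route as the paper: both clear the denominators in $e_m(\Gamma_{n-2})$ to get integer summands, observe that $(2n-3-j)j$ is always even because $2n-3$ is odd, and conclude $C_n\equiv(2n-3)^{2n-2}\equiv 1\pmod 2$. Your write-up is slightly more careful about making the integrality of each summand explicit, but the argument is the same.
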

\begin{proof}
Equation \eqref{eq:SchubertCn} can be written as
$$
C_n  = (2n-3)^{2n-2} + \sum_{m=1}^{n-2} (2n-3)^{2n-2-2m}(-1)^m(2m+1)K_m e_{m}(\{(2n-3-j)j\}_{j=1}^{n-2})
$$
For any $j_l$ such that $(2n-3-j_l)j_l$ is a factor of a term in $e_{m}(\{(2n-3-j)j\}_{j=1}^{n-2})$, if $j_l$ is even that term does not contribute to the equation above, and if $j_l$ is odd then its accompanying factor $(2n-3-j_l)$ is even, and the same happens. Therefore, we conclude $C_n \equiv (2n-3)^{2n-2}\mod 2$, which is a power of an odd number, i.e., it is always odd.
\end{proof}
\subsection{Bombieri norm formula}
Our second formula is a combinatorial expansion in terms of products over certain set compositions and the counting of labelings with cycles of a certain type of blocks, which we now introduce.

\begin{defi}
An \emph{$h$-special composition} $(I,J)$ of the set $[2n-2]=\{1,2,\dots,2n-2\}$, for any integer $n\geq 2$, is a composition, i.e., an ordered partition into two disjoint subsets $I,J\subset [2n-2]$, of equal cardinality each $n-1$, where $1\in I, (2n-2)\in J$, and such that the intersection $I\cap(J-1)$ has size $h$, (with element-wise subtraction). We denote the set of such compositions as $\mathcal{P}_{2n-2}^{(h)}$.
\end{defi}

\begin{example}
Thinking of the elements of $[2n-2]$ as ordered indices, the intersection $I\cap(J-1)$ can be interpreted as those indices in $I$ exactly to the left of an index in $J$. Thus, fixing its cardinal to $h$ amounts to choosing a composition such that $I$, and $J$, consists of $h$ subsets of consecutive indices. So an $h$-special composition of $[2n-2]$ consists of choosing $n-2$ numbers from $\{2,3,\dots,2n-3\}$ such that they form $h$ groups of consecutive numbers when adding $1$ and $2n-2$ to $I$ and $J$ respectively. For instance, consider $(I,J)\in\mathcal{P}^{(3)}_{12}$, represented by following blocks:
\begin{equation*}
\ytableausetup
{boxsize=1.1em}
\ytableausetup
{aligntableaux=center}
\begin{ytableau}
*(black!10) 1 & 2 & 3 & *(black!10) 4 & *(black!10) 5 & 6 & *(black!10) 7 & *(black!10) 8 & *(black!10) 9 & 10 & 11 & 12 
\end{ytableau}
\end{equation*}
Here $I=\{1,4,5,7,8,9\}, J=\{2,3,6,10,11,12\}$ have $I\cap(J-1)=\{1,5,9\}$, thus $h=3$, and indeed each $I,J$ have three consecutive blocks of numbers: $\{1\},\{4,5\},\{7,8,9\}$ for $I$, and $\{2,3\},\{6\},\{10,11,12\}$ for $J$.
\end{example}
\begin{defi}
A \emph{$z$-block} $\BB{n}{h}$ of width $n$ and bulk size $h$ is a set of elementary boxes forming a two row diagram with $n$ boxes in each row, where $h$ of the boxes are aligned in common columns:
\begin{equation}
\BB{n}{h} = \quad
\ytableausetup
{boxsize=1.15em}
\ytableausetup
{aligntableaux=center}
\begin{ytableau}
~ & \none[\dots] & & & & \none[\dots] & & &\none &\none \\
\none~ & \none &\none & & & \none[\dots]& & & & \none[\dots] & 
\end{ytableau}
\end{equation}
A \emph{labeling} $\pi\times\rho$ on a $z$-block $\BB{n}{h}$ consists of independently assigning a permutation $\pi,\rho\in S_{n}$ to each of the rows. 
\end{defi}
\begin{defi}
A $k$-\emph{cycle} in a $z$-block labeling is a group of $k$ columns from the bulk which form a cyclic permutation when read vertically. A \emph{cyclic profile} $\underline{\lambda}=(\lambda_1,\lambda_2,\dots,\lambda_h)$ is any labeling which includes $\lambda_1$ 1-cycles, $\lambda_2$ 2-cycles, etc. in any positions in the bulk. The \emph{residual block} of a profile, $\rBB{n}{h}{\ulmbd}$, is the $z$-block obtained from the original by removing all the columns used in cycles, i.e., the $z$-block of reduced width $n-\sum_{k=1}^h k\lambda_k$ and bulk size $h-\sum_{k=1}^h k\lambda_k$.
\end{defi}
Notice that the columns of each cycle need not be consecutive. 
Let us denote by $\Lambda(n,h)$ the set of all tuples $\ulmbd$ that are feasible cycle profiles for a width $n$ and bulk size $h$.
\begin{defi}
The \emph{length} of a $z$-block $\BB{n}{h}$ for the cyclic profile $\underline{\lambda}$ is the number $L_{\underline{\lambda}}\left[\BB{n}{h}\right]$	counting the different possible labelings of the block that precisely have as many cycles of every type as specified by $\underline\lambda$ and no others.
\end{defi}
%
\begin{example}
Consider the $z$-block of width 10 and bulk size 7: 
\begin{equation*}
\BB{10}{7} = 
\ytableausetup
{boxsize=1.15em}
\ytableausetup
{aligntableaux=center}
\begin{ytableau}
~ & ~ & ~ & ~ & ~ & ~ & ~ & ~ & ~ & ~ &\none &\none \\
\none~ & \none &\none & ~ & ~ & ~ & ~ & ~ & ~ & ~ & ~ & ~ & ~
\end{ytableau}
\end{equation*}
A specific labeling is for instance

\begin{equation*}
\ytableausetup
{boxsize=1.15em}
\ytableausetup
{aligntableaux=center}
\begin{ytableau}
4 & 2 & 1 & 3 & 6  & 5 & 9 &  7 &  8 & 10 &\none &\none \\
\none~ & \none &\none & 3 &  5 &  6 &  7 &  8 &  9 & 4 & 1 & 2 & 10 
\end{ytableau}
\end{equation*}
which has 3 cycles of profile $(1,1,1,0,0,0,0)$, in a particular position and order in the bulk (here their columns are consecutive for illustrative purposes, but this need not be the case). There is 1 column with the same label in both rows, 1 pair of columns with swapped values top-bottom, and 1 set of 3 columns with a cyclic permutation when read vertically stacking the columns of the cycle one after another. In order to compute the number of labelings with this specific profile, $L_{(1,1,1)}\left[\BB{10}{7}\right]$, we proceed as follows: choose the $k$ columns that will be a $k$-cycle (a factor $\binom{h}{k}$), then choose the values to fill them (a factor $\binom{n}{k}$), order them inside the cycle (a factor $k!$), and repeat for each new cycle choosing from the remaining columns ($h-k$) and values ($n-k$), and finally multiply by the number of choices to label the boxes that do not form any cycles (which inevitably leads to recursion). Notice that ordering the cycle requires an additional factor of $2$ per cycle of size 3 or more, since the $k!$ only accounts for the sequence filling in the cycle and an additional orientation of the diagonals in the two rows needs to be chosen. In our example, the 3-cycle $(978)$ can have the columns filled as $[97][78][89]$ or as $[79][87][98]$, which corresponds to filling in/reading the cycle in the $z$-block from bottom to top instead of from top to bottom. Then
$$
L_{(1,1,1)}\left[\BB{10}{7}\right] = \binom{7}{1}\binom{10}{1}1! \binom{6}{2}\binom{9}{2}2! \binom{4}{3}\binom{7}{3}3!\,2\cdot\, L_{(0)}\left[\BB{4}{1}\right],
$$
where $L_{(0)}\left[\BB{4}{1}\right]$ counts the labelings without cycles of the residual $z$-block 
\begin{equation*}
\rBB{10}{7}{(1,1,1)} = \BB{4}{1} = 
\ytableausetup
{boxsize=1.15em}
\ytableausetup
{aligntableaux=center}
\begin{ytableau}
~ & ~ & ~ &~ &\none &\none \\
\none~ & \none &\none & ~ & ~ & ~ & ~ 
\end{ytableau}
\end{equation*}
To determine $L_{(0)}\left[\BB{4}{1}\right]$ notice that from the possible labelings $4!^2$ ($4!$ permutations at the top row per each of the $4!$ at the bottom row), we must remove the cases of a 1-cycle happening in the new bulk of size $1$, so
$$
L_{(0)}\left[\BB{4}{1}\right] = 4!^2 - L_{(1)}\left[\BB{4}{1}\right] = 4!^2 - \binom{1}{1}\binom{4}{1}1!L_{(0)}\left[\BB{3}{0}\right],
$$
and since there is no bulk in the last residual $\rBB{4}{1}{(1)}=\BB{3}{0}$, we have $L_{(0)}\left[\BB{3}{0}\right]=3!^2$, hence
$$
L_{(1,1,1)}\left[\BB{10}{7}\right] = \binom{7}{1}\binom{10}{1}1! \binom{6}{2}\binom{9}{2}2! \binom{4}{3}\binom{7}{3}3!\, 2\left[  4!^2 - \binom{1}{1}\binom{4}{1}1! 3!^2 \right] = 54867456000.
$$
\end{example}

After this example, it is easy to see that the counting of labelings without cycles on a block obeys
\begin{equation}\label{eq:zerocycles}
	L_{\underline{0}}\left[ \BB{n}{h}  \right] = n!^2 - \sum_{\substack{\underline{\lambda}\in\Lambda(n,h)\\ \underline{\lambda}\neq 0 }} L_{\underline{\lambda}}\left[ \BB{n}{h} \right],
\end{equation}
so the zero labeling of a residual block is computed in terms of the nonzero labelings of the same block, and these in terms of the succesive residuals, until reaching a $z$-block without bulk. Therefore, computing the length of $z$-blocks is an essentially recursive process.

With all these preliminaries we are ready to state our second formula that will be proved by computing the Bombieri norm of certain polynomial determinants arising in random algebraic geometry.
\begin{thm}\label{th:main}
	The number $C_n$ of lines on a generic complex projective hypersurface of degree $2n-3$ in $
	\mathbb{CP}^n$ satisfies the combinatorial expansion:
	\begin{equation}\label{eq:BombieriCn}
		\boxed{
		C_n = \sum_{h=1}^{n-1}W_{n,h}\!\!\sum_{\underline{\lambda}\in\Lambda(n-1,h)} 2^{|\underline{\lambda}|}L_{\underline{\lambda}}\left[ \BB{n-1}{h} \right], 
		}
	\end{equation}
	where the first factor is a sum over the $h$-special compositions of $[2n-2]$ given by
	\begin{align} \label{eq:coeffW}
		W_{n,h} & = \frac{1}{n!(n-1)!} \sum_{(I,J)\in\mathcal{P}_{2n-2}^{(h)}} \prod_{i\in I}(2n-2-i) \prod_{j\in J}(j-1), 
	\end{align}
	and the second factor is a weighted sum over all cycle profiles of the $z$-block lengths, which has the closed-form expression
\begin{equation}\label{eq:lengthZB}
	\sum_{\underline{\lambda}\in\Lambda(n-1,h)} 2^{|\underline{\lambda}|}L_{\underline{\lambda}}\left[ \BB{n-1}{h} \right] = \frac{n!(n-1)!}{n-h}.
\end{equation} 
\end{thm}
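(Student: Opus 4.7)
The plan is to invoke the Basu-Lerario-Lundberg-Peterson theorem, which expresses $C_n$ as a positive constant times the squared Bombieri norm $\|\det M(t,s)\|_B^2$ of a distinguished polynomial determinant, whose matrix entries are binary forms in $(t,s)$ arising from restricting a generic degree $2n-3$ hypersurface equation to parametrized lines. Since the Bombieri norm weights each monomial $t^a s^b$ by the inverse of $\binom{a+b}{a}$, the proof reduces to an explicit combinatorial evaluation of $\|\det M\|_B^2$ together with its rearrangement into the stated double-sum form.

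The first step is to expand $\det M$ by the Leibniz formula and then write $\|\det M\|_B^2$ as a double sum over pairs $(\pi,\rho)\in S_{n-1}\times S_{n-1}$ coming from the two factors of $\det M \cdot \overline{\det M}$. Expanding each matrix entry $M_{i,\pi(i)}$ as a sum of monomials and applying the Bombieri pairing monomial by monomial produces a sum over all ways of assigning, to every row $i$ and each of the two permutations, a choice of monomial from the corresponding polynomial entry. The pair $(\pi,\rho)$ naturally labels the two rows of a $z$-block of width $n-1$, while the monomial selections partition the total bidegree $2n-2$ between $t$- and $s$-powers.

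Grouping contributions first by this bidegree split is what yields the factor $W_{n,h}$. Summing over all monomial pairings that assign a given set $I$ of indices to the $t$-powers and its complement $J$ to the $s$-powers picks out precisely an $h$-special composition of $[2n-2]$, where the adjacency statistic $h=|I\cap(J-1)|$ counts the maximal consecutive blocks in $I$ and $J$ --- which, on the $z$-block side, are exactly the columns that can host a cyclic alignment of $\pi$ and $\rho$. The products $\prod_{i\in I}(2n-2-i)\prod_{j\in J}(j-1)$ should emerge from collecting the residual binomial factorials in the Bombieri weights after matching them with the degrees of the entries of $M$. The remaining sum over $\pi\times\rho$ compatible with the chosen composition is then $\sum_{\underline{\lambda}\in\Lambda(n-1,h)}2^{|\underline{\lambda}|}L_{\underline{\lambda}}[\BB{n-1}{h}]$, where the $2^{|\underline{\lambda}|}$ factor accounts for the two orientations in which each cycle of length $\geq 3$ can be read within the bulk, exactly as flagged by the defining example.

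For the closed-form identity $\sum_{\underline{\lambda}\in\Lambda(n-1,h)} 2^{|\underline{\lambda}|}L_{\underline{\lambda}}[\BB{n-1}{h}] = n!(n-1)!/(n-h)$, I would give a self-contained combinatorial proof: using \eqref{eq:zerocycles} to expand each $L_{\underline{\lambda}}$ in terms of zero-cycle labelings of successively smaller residual blocks, an induction on the bulk size $h$ --- or equivalently a telescoping generating-function argument --- collapses the weighted sum to the stated ratio. The main obstacle lies in the bookkeeping step of the previous paragraph: verifying that the Bombieri expansion distributes exactly into the $(h,\underline{\lambda})$-strata with the correct weights on both sides, so that the combinatorics of compositions and cycle profiles arises as a clean factorization rather than as an intertwined mess. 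Once this matching lemma is established, the equality \eqref{eq:BombieriCn} assembles immediately from reorganizing $\|\det M\|_B^2$ along compositions first and cycle profiles second.
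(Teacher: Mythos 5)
Your overall strategy (the BLLP theorem plus an explicit evaluation of the squared Bombieri norm of a polynomial determinant, organized first by compositions and then by cycle profiles) is the same as the paper's, but the proposal has several concrete gaps. First, the object whose Bombieri norm must be computed is not a binary form in $(t,s)$: it is the determinant $P_n(x)$ of a $(2n-2)\times(2n-2)$ matrix, a homogeneous polynomial of degree $2n-2$ in the $(n-1)(2n-3)$ coefficient variables $x_{i,j}$, and the Bombieri weight of a monomial is $\alpha_1!\cdots\alpha_N!/(2n-2)!$ in the exponents of the $x_{i,j}$. This matters: the only exponents exceeding $1$ are squares, occurring exactly at the $1$-cycles of the bulk, and they contribute the factor $2^{\lambda_1}$; a weighting of $t^as^b$ by $\binom{a+b}{a}^{-1}$ does not produce this. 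Relatedly, the two rows of the $z$-block do not come from the two factors of $\det M\cdot\overline{\det M}$ with independent $(\pi,\rho)\in S_{n-1}\times S_{n-1}$; they come from a single $\sigma\in S_{2n-2}$ in the Leibniz expansion, split by the parity of $\sigma(i)$ into an $I$-part and a $J$-part --- which is also what forces $|I|=|J|=n-1$, $1\in I$, $2n-2\in J$, i.e., the $h$-special compositions. Second, you never address signs: to extract $|P_{\underline{\alpha}}|^2$ one must count the permutations producing a fixed monomial \emph{and} check that their signatures agree; this holds because each cycle swap costs an even number of transpositions, and without that observation the whole count could collapse by cancellation. Third, attributing $2^{|\ulmbd|}$ to ``two reading orientations per cycle of length $\geq 3$'' is incorrect: it arises as $(2^{\lambda_2+\cdots+\lambda_h})^2$ from the squared coefficient, times $2^{\lambda_1}$ from the Bombieri weight of squared variables, divided by $2^{\lambda_2+\cdots+\lambda_h}$ to pass from labelings to distinct monomials.

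Finally, the closed form \eqref{eq:lengthZB} does not ``collapse'' by a routine induction: it requires resolving the recursion \eqref{eq:zerocycles} through unsigned Stirling numbers of the first kind (via $\sum_{l}2^{l}{\,k\,\brack\, l\,}=(k+1)!$), telescoping the nested binomial products, reindexing by compositions of the total number of removed columns, and the nontrivial inverse-binomial identity $\sum_{j=0}^{m}\binom{m}{j}\binom{r}{j}^{-1}=\frac{r+1}{r+1-m}$. Asserting the identity without such a derivation leaves the quantitative heart of the theorem unproved. As you yourself note, the ``matching lemma'' distributing the Bombieri expansion into the $(h,\ulmbd)$-strata with the correct weights is the main content of the proof, and the proposal does not supply it.
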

\begin{rmk}
Even if the weighted factors $W_{n,h}$ summing over $h$-special compositions are not known to have a more closed-form expression, their non-weighted sum does. Indeed, we have:
\begin{equation}\label{eq:Wnh}
	\sum_{h=1}^{n-1}W_{n,h} =  \frac{(2n-3)^2(2n-4)!}{n!(n-1)!}(-1)^{n-2}e_{n-2}(G_{2n-4}),
\end{equation}
where the evaluation of the elementary symmetric polynomial is at the set of numbers $G_{2n-4}=\left\{ 1-\frac{2n-3}{k} \right\}_{k=1}^{2n-4}$. To show this, let us define $\mathcal{P}_{2n-2}$ as the set of all the 2-compositions of $[2n-2]=\{1,2,\dots,2n-2\}$, i.e., the collection of ordered pairs of (possibly empty) disjoint subsets $(I,J)$ such that $I\cup J = [2n-2]$, and define $[x^{n-2}]$ as the operator extracting the coefficient of such a power of $x$ in a polynomial. Then notice that
\begin{align*}
& \sum_{h=1}^{n-1} \sum_{(I,J)\in\mathcal{P}_{2n-2}^{(h)}} \prod_{i\in I}(2n-2-i) \prod_{j\in J}(j-1) =  \sum_{\substack{(I,J)\in\mathcal{P}_{2n-2}\\ |I|=|J|=n-1\\ 1\in I,2n-2\in J }} \prod_{i\in I}(2n-2-i) \prod_{j\in J}(j-1) \\
& = (2n-3)^2 \sum_{\substack{(I,J)\in\mathcal{P}_{2n-4}\\ |I|=|J|=n-2 }} \prod_{i\in I}(2n-3-i) \prod_{j\in J}j =(2n-3)^2[x^{n-2}]\sum_{\substack{(I,J)\in\mathcal{P}_{2n-4} }} \prod_{i\in I}(2n-3-i) \prod_{j\in J}j\cdot x.
\end{align*}
It will be one of the most important steps in our proofs the fact that the sum over all 2-compositions of such a pair of products, as in the last equation, is actually the distributivity sum expansion of a product of binomial sums:
\begin{align*}
(2n-3)^2[x^{n-2}]\sum_{\substack{(I,J)\in\mathcal{P}_{2n-4} }} \prod_{i\in I}(2n-3-i) \prod_{j\in J}j\cdot x = (2n-3)^2[x^{n-2}]\prod_{k=1}^{2n-4}((2n-3-k) + kx).
\end{align*}
But then, after taking $k$ as common factor, the last product can be expanded as a polynomial in $x$ with the signed elementary symmetric polynomials as coefficients, which upon extracting the term of order $n-2$ yields Equation \eqref{eq:Wnh}.
\end{rmk}
\begin{cor}
	The asymptotic leading order of the sequence $C_n$ is upper bounded by:
	\begin{equation}
		\log C_n\leq 2n\log(n)+O(n). 
	\end{equation} 
\end{cor}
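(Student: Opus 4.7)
The plan is to chain together Theorem \ref{th:main}, the closed-form \eqref{eq:lengthZB}, and the binomial-product identity established in the remark, and then use a trivial coefficient-extraction bound to recover the asymptotic.

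First, I would substitute \eqref{eq:lengthZB} directly into \eqref{eq:BombieriCn} to eliminate the $z$-block lengths, obtaining
\begin{equation*}
C_n \;=\; n!(n-1)!\sum_{h=1}^{n-1}\frac{W_{n,h}}{n-h}.
\end{equation*}
Since every $W_{n,h}$ is a sum of products of strictly positive integers (so $W_{n,h}\geq 0$), and $\frac{1}{n-h}\leq 1$ for $1\leq h\leq n-1$, this immediately gives $C_n\leq n!(n-1)!\sum_{h=1}^{n-1}W_{n,h}$. The point of this crude bound is to hand us exactly the quantity for which the remark provides a usable closed form.

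Next, I would invoke the intermediate step of the remark's derivation, namely
\begin{equation*}
\sum_{h=1}^{n-1}W_{n,h} \;=\; \frac{(2n-3)^2}{n!(n-1)!}\,[x^{n-2}]\prod_{k=1}^{2n-4}\bigl((2n-3-k)+kx\bigr).
\end{equation*}
Because every factor $(2n-3-k)+kx$ for $1\leq k\leq 2n-4$ has nonnegative coefficients, the polynomial on the right has nonnegative coefficients, so the coefficient of any single monomial is dominated by the total value at $x=1$. Evaluating at $x=1$ collapses each factor to $2n-3$, yielding
\begin{equation*}
[x^{n-2}]\prod_{k=1}^{2n-4}\bigl((2n-3-k)+kx\bigr) \;\leq\; (2n-3)^{2n-4}.
\end{equation*}
Combining the two inequalities gives $C_n\leq (2n-3)^{2n-2}$, and taking logarithms produces $\log C_n\leq (2n-2)\log(2n-3)=2n\log n+O(n)$, as claimed.

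There is no real obstacle here beyond being careful about signs and nonnegativity: the combinatorial formulation of Theorem \ref{th:main} automatically provides a sum of nonnegative terms, so one can replace the unknown coefficient extraction by the full polynomial evaluation at $x=1$ without losing the correct asymptotic order. It is worth noting that this matches the leading term of the known sharp asymptotic $\log C_n\sim 2n\log n+O(n)$, which reflects that the evaluation at $x=1$ is tight up to the polynomial prefactor in $n$ — a more refined saddle-point analysis of the coefficient $[x^{n-2}]$ would be needed to track the $O(n)$ term exactly, but this is not required for the stated upper bound.
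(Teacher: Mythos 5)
Your argument is correct, and it takes a genuinely different route from the paper's. The paper stays inside the double sum of Theorem \ref{th:main} and bounds term by term: it majorizes the composition sum and the products in \eqref{eq:coeffW} by crude maxima, landing on $C_n\leq (2n-2)^{n-1}(2n-3)^{n-2}$. You instead first collapse the $z$-block factor exactly via \eqref{eq:lengthZB}, drop the harmless $\frac{1}{n-h}\leq 1$ (valid, since every $W_{n,h}$ is a sum of products of the positive integers $2n-2-i$ for $i\in I\ni 1$ and $j-1$ for $j\in J\ni 2n-2$), and then bound the aggregate $\sum_h W_{n,h}$ through the product identity of the remark preceding the corollary, using the standard fact that a coefficient of a polynomial with nonnegative coefficients is at most its value at $x=1$. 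This yields the cleaner bound $C_n\leq (2n-3)^{2n-2}$, again of the form $2n\log n+O(n)$ after taking logarithms. What your route buys is transparency: the evaluation at $x=1$ is exact and instantaneous, and it exhibits the bound as precisely the prefactor $(2n-3)^{2n-2}$ already appearing in the Schubert formula \eqref{eq:SchubertCn}; what the paper's route buys is that it never needs \eqref{eq:lengthZB} or the remark. One small caution on your closing aside: the gap between $[x^{n-2}]P(x)$ and $P(1)$ for this degree-$(2n-4)$ polynomial, and hence between your bound and the true $C_n$, is a factor of size $e^{O(n)}$ rather than merely polynomial in $n$; this does not affect the stated $2n\log n+O(n)$ bound, but the claim of tightness ``up to a polynomial prefactor'' should be softened.
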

\begin{proof}
From Theorem \ref{th:main}, we may bound every term by its maximum value to obtain
\begin{align*}
	C_n & \leq \sum_{h=1}^{n-1}\frac{1}{n-1}\sum_{(I,J)\in\mathcal{P}^{(h=1)}_{2n-2}}(2n-2)^{n-1}\prod_{i\in I} \left( 1-\frac{i}{2n-2} \right)\prod_{j\in J}(j-1) \\
	& \leq (2n-2)^{n-1}\prod_{j = n}^{2n-2}(j-1) \leq (2n-2)^{n-1}(2n-3)^{n-2},
\end{align*}
that taking logarithms and expanding at leading order results in the stated bound.
\end{proof}
%
%
%
\section{Examples}\label{sec:examples}

Any of the closed-form formulas \eqref{eq:Zagier2}, \eqref{eq:libgober}, \eqref{eq:Dominici}, \eqref{eq:Harris}, \eqref{eq:SchubertCn} efficiently produce many values of the sequence in a few seconds in a modern personal computer by direct numerical computation, instead of symbolic algebra in the Chow ring:
\begin{flushleft}
\noindent
\texttt{\small
\quad 1\\
\quad 27\\
\quad 2875\\
\quad 698005\\
\quad 305093061\\
\quad 210480374951\\
\quad 210776836330775\\
\quad 289139638632755625\\
\quad 520764738758073845321\\
\quad 1192221463356102320754899\\
\quad 3381929766320534635615064019\\
\quad 11643962664020516264785825991165\\
\quad 47837786502063195088311032392578125\\
\quad 231191601420598135249236900564098773215\\
\quad 1298451577201796592589999161795264143531439\\
\quad 8386626029512440725571736265773047172289922129\\
\quad 61730844370508487817798328189038923397181280384657\\
\quad 513687287764790207960329434065844597978401438841796875\\
\quad 4798492409653834563672780605191070760393640761817269985515, ...\\}\normalsize	
\end{flushleft}
For the Bombieri norm formula \eqref{eq:BombieriCn}, instead of substituting values directly, let us explore the combinatorial structure of the expansion to derive the 27 lines on a cubic surface, and the 2875 lines on the quintic threefold, which reflects the counting of different monomials in the polynomial determinant \eqref{eq:randommatrix} from random algebraic geometry, see Section \ref{sec:Bombieri}. 
\subsection{27 lines on a cubic in $\mathbb{P}^3$}
%
In the case $n=3$, the Bombieri norm can be directly calculated by inspecting the determinant polynomial involved, which only has $7$ monomials, and this is done in \cite[Cor. 8]{Basu2019}. For $n-1=2,2n-2=4$, we have $h\in\{1,2\}$, so the $h$-special compositions needed are:
\begin{align*}
& \mathcal{P}^{(1)}_{4} = \{\;
\ytableausetup
{boxsize=1.1em}
\ytableausetup
{aligntableaux=center}
\begin{ytableau}
*(black!10) 1 & *(black!10) 2 & 3 & 4
\end{ytableau} \;\}, \quad I = \{1,2\}, J = \{3,4\}, \\
& \mathcal{P}^{(2)}_{4} = \{\;
\ytableausetup
{boxsize=1.1em}
\ytableausetup
{aligntableaux=center}
\begin{ytableau}
*(black!10) 1 & 2 & *(black!10) 3 & 4
\end{ytableau}
\;\},  \quad I = \{1,3\}, J = \{2,4\}.
\end{align*}
The factors $W_{n,h}$ are then:
\begin{align*}
W_{3,1} = \frac{1}{3!2!}(4-1)(4-2)(3-1)(4-1) = 3, \\
W_{3,2} = \frac{1}{3!2!}(4-1)(4-3)(2-1)(4-1) = \frac{3}{4}.
\end{align*}
We need the count of all the possible cycles in the $z$-blocks of width $2$ and bulk size $1$ or $2$. The feasible cycle profiles are $\Lambda(2,1)=\{(0),(1)\}$ and $\Lambda(2,2)=\{(2,0),(0,1)\}$, and so the lengths become:
\begin{align*}
& L_{(1)} \left[\;
\ytableausetup
{boxsize=1em}
\ytableausetup
{aligntableaux=center}
\begin{ytableau}
~ & \bullet &\none \\
\none~ & \bullet & ~ 
\end{ytableau} \;\right] = 
\binom{1}{1}\binom{2}{1}1! = 2, \\
& L_{(0)} \left[\;
\ytableausetup
{boxsize=1em}
\ytableausetup
{aligntableaux=center}
\begin{ytableau}
~ & &\none \\
\none~ & ~ & ~ 
\end{ytableau} \;\right] = 
2!^2 -  L_{(1)} \left[\;
\ytableausetup
{boxsize=1em}
\ytableausetup
{aligntableaux=center}
\begin{ytableau}
~ &\bullet &\none \\
\none~ & \bullet & ~ 
\end{ytableau} \;\right] = 2, \\
& L_{(2,0)} \left[\;
\ytableausetup
{boxsize=1em}
\ytableausetup
{aligntableaux=center}
\begin{ytableau}
 \bullet &\ast  \\
\bullet & \ast  
\end{ytableau} \;\right] = \binom{2}{1}1!\binom{2-1}{1}1! = 2, \\
& L_{(0,1)} \left[\;
\ytableausetup
{boxsize=1em}
\ytableausetup
{aligntableaux=center}
\begin{ytableau}
 \bullet & \ast  \\
\ast & \bullet  
\end{ytableau} \;\right] = \binom{2}{2}\binom{2}{2}2! = 2.
\end{align*}
Therefore, the number of lines on a generic cubic surface in $\mathbb{CP}^3$ is:
\begin{align*}
C_3 & =\; W_{3,1}\left( 2^1  L_{(1)} \left[\;
\ytableausetup
{boxsize=1em}
\ytableausetup
{aligntableaux=center}
\begin{ytableau}
~ & \bullet &\none \\
\none~ & \bullet & ~ 
\end{ytableau} \;\right]
+
2^0 L_{(0)} \left[\;
\ytableausetup
{boxsize=1em}
\ytableausetup
{aligntableaux=center}
\begin{ytableau}
~ & &\none \\
\none~ & ~ & ~ 
\end{ytableau} \;\right]
\right)
 \\
&\quad +W_{3,2}\left(
2^2
L_{(2,0)} \left[\;
\ytableausetup
{boxsize=1em}
\ytableausetup
{aligntableaux=center}
\begin{ytableau}
 \bullet &\ast  \\
\bullet & \ast  
\end{ytableau} \;\right]
+
2^1
 L_{(0,1)} \left[\;
\ytableausetup
{boxsize=1em}
\ytableausetup
{aligntableaux=center}
\begin{ytableau}
\bullet & \ast  \\
\ast & \bullet  
\end{ytableau} \;\right]
\right) \\
&  = 3(4+2)+\frac{3}{4}(8+4) = 27.
\end{align*}
%
%
\subsection{2875 lines on a quintic in $\mathbb{P}^4$}
%
%
In the case $n=4$, the Bombieri norm must be calculated for a polynomial determinant with 189 monomials, which is only tractable taking advantage of the combinatorial structure of Theorem \ref{th:main}.
For $n-1=3, 2n-2=6$, $h\in\{1,2,3\}$, the special $h$-compositions are:
\begin{align*}
& \mathcal{P}^{(1)}_{6} = \{\;
\ytableausetup
{boxsize=1.1em}
\ytableausetup
{aligntableaux=center}
\begin{ytableau}
*(black!10) 1 & *(black!10) 2 & *(black!10) 3 & 4 & 5 & 6
\end{ytableau} \;\},  \\
& \mathcal{P}^{(2)}_{6} = \{\;
\ytableausetup
{boxsize=1.1em}
\ytableausetup
{aligntableaux=center}
\begin{ytableau}
*(black!10) 1 & *(black!10) 2 &  3 & *(black!10) 4 & 5 & 6
\end{ytableau},
\;
\ytableausetup
{boxsize=1.1em}
\ytableausetup
{aligntableaux=center}
\begin{ytableau}
*(black!10) 1 & 2 & *(black!10) 3 & *(black!10) 4 & 5 & 6
\end{ytableau}
,
\;
\ytableausetup
{boxsize=1.1em}
\ytableausetup
{aligntableaux=center}
\begin{ytableau}
*(black!10) 1 & 2 & 3 & *(black!10) 4 & *(black!10) 5 & 6
\end{ytableau}
,
\;
\ytableausetup
{boxsize=1.1em}
\ytableausetup
{aligntableaux=center}
\begin{ytableau}
*(black!10) 1 & *(black!10)  2 & 3 & 4 & *(black!10) 5 & 6
\end{ytableau}
\;\},  \\
& \mathcal{P}^{(3)}_{6} = \{\;
\ytableausetup
{boxsize=1.1em}
\ytableausetup
{aligntableaux=center}
\begin{ytableau}
*(black!10) 1 & 2 & *(black!10) 3 & 4 &  *(black!10) 5 & 6
\end{ytableau}
\;\}, 
\end{align*}
And calculating the $W_{n,h}$ factors as in the previous example:
\begin{align*}
& W_{4,1} = \frac{1}{4!3!}(6-1)(6-2)(6-3)(4-1)(5-1)(6-1) = 25, \\
& W_{4,2} = \frac{1}{4!3!}[(6-1)(6-2)(6-4)(3-1)(5-1)(6-1) \\
&\quad\quad\quad\quad\quad + (6-1)(6-3)(6-4)(2-1)(5-1)(6-1) \\
&\quad\quad\quad\quad\quad + (6-1)(6-4)(6-5)(2-1)(3-1)(6-1) \\
&\quad\quad\quad\quad\quad + (6-1)(6-2)(6-5)(3-1)(4-1)(6-1) ] = \frac{725}{36}. \\
& W_{4,3} = \frac{1}{4!3!}(6-1)(6-3)(6-5)(2-1)(4-1)(6-1) = \frac{25}{16}.
\end{align*}
Now, the $z$-blocks can be of width $3$ and bulk size $1,2$ or $3$, and the feasible cycle profiles are then $\Lambda(3,1)=\{(0),(1)\},\; \Lambda(3,2)=\{(0,0), (1,0), (2,0), (0,1)\},\; \Lambda(3,3)=\{(3,0,0), (1,1,0), (0,0,1)\}$ (notice there is no case $(0,0,0)$, as all labelings for width 3 and bulk size 3 have cycles). The lengths are:
\begin{align*}
& L_{(1)} \left[\;
\ytableausetup
{boxsize=1em}
\ytableausetup
{aligntableaux=center}
\begin{ytableau}
~ & ~ & \bullet & \none &\none \\
\none~ & \none & \bullet & ~ & ~
\end{ytableau} \;\right] = 
\binom{1}{1}\binom{3}{1} L_{(0)} \left[\;
\ytableausetup
{boxsize=1em}
\ytableausetup
{aligntableaux=center}
\begin{ytableau}
~ & ~ & \none &\none \\
\none~ & \none & ~ & ~
\end{ytableau} \;\right]
= 3\cdot 2!^2 = 12, \\
& L_{(0)} \left[\;
\ytableausetup
{boxsize=1em}
\ytableausetup
{aligntableaux=center}
\begin{ytableau}
~ & ~ & ~ & \none &\none \\
\none~ & \none & ~ & ~ & ~
\end{ytableau} \;\right] 
= 3!^2 -  L_{(1)} \left[\;
\ytableausetup
{boxsize=1em}
\ytableausetup
{aligntableaux=center}
\begin{ytableau}
~ & ~ & \bullet & \none &\none \\
\none~ & \none & \bullet & ~ & ~
\end{ytableau} \;\right]
= 24, \\
& L_{(0,1)} \left[\;
\ytableausetup
{boxsize=1em}
\ytableausetup
{aligntableaux=center}
\begin{ytableau}
~ & \bullet & \ast &\none \\
\none~ & \ast & \bullet & ~
\end{ytableau} \;\right] 
= \binom{2}{2}\binom{3}{2}2!\cdot
L_{(0,1)} \left[\;
\ytableausetup
{boxsize=1em}
\ytableausetup
{aligntableaux=center}
\begin{ytableau}
~ & \none \\
\none~ & 
\end{ytableau} \;\right] 
= 6, \\
& L_{(2,0)} \left[\;
\ytableausetup
{boxsize=1em}
\ytableausetup
{aligntableaux=center}
\begin{ytableau}
~ & \bullet & \ast &\none \\
\none~ & \bullet & \ast & ~
\end{ytableau} \;\right] 
= 3\cdot (3-1)
= 6, \\ 
& L_{(1,0)} \left[\;
\ytableausetup
{boxsize=1em}
\ytableausetup
{aligntableaux=center}
\begin{ytableau}
~ & \bullet & &\none \\
\none~ & \bullet & & ~
\end{ytableau} \;\right] 
= \binom{2}{1}\binom{3}{1}1!\cdot
L_{(0)} \left[\;
\ytableausetup
{boxsize=1em}
\ytableausetup
{aligntableaux=center}
\begin{ytableau}
~ & ~ &  \none \\
\none~ & ~ &  
\end{ytableau} \;\right]
= 6\cdot 2 = 12
, \\ 
& L_{(0,0)} \left[\;
\ytableausetup
{boxsize=1em}
\ytableausetup
{aligntableaux=center}
\begin{ytableau}
~ & ~ & &\none \\
\none~ & ~ & & ~
\end{ytableau} \;\right] 
= 3!^2 - 12-6-6 =12, \\
& L_{(0,0,1)} \left[\;
\ytableausetup
{boxsize=1em}
\ytableausetup
{aligntableaux=center}
\begin{ytableau}
\bullet & \ast & \times \\
\ast & \times & \bullet
\end{ytableau} \text{ or }
\ytableausetup
{boxsize=1em}
\ytableausetup
{aligntableaux=center}
\begin{ytableau}
\ast & \times & \bullet \\
\bullet & \ast & \times
\end{ytableau}
 \;\right] 
= \binom{3}{3}\binom{3}{3}3!\,2 = 12, \\
& L_{(1,1,0)} \left[\;
\ytableausetup
{boxsize=1em}
\ytableausetup
{aligntableaux=center}
\begin{ytableau}
\bullet & \ast & \times \\
\bullet & \times & \ast
\end{ytableau} \;\right] 
= \binom{3}{1}\binom{3}{1}1! \binom{2}{2}\binom{2}{2}2! = 18, \\
& L_{(3,0,0)} \left[\;
\ytableausetup
{boxsize=1em}
\ytableausetup
{aligntableaux=center}
\begin{ytableau}
\bullet & \ast & \times \\
\bullet & \ast & \times
\end{ytableau} \;\right] 
= 3! = 6
\end{align*}
Therefore the number of lines on a generic quintic threefold in $\mathbb{CP}^4$ is:
\begin{align*}
C_4 & =\; W_{4,1}\left( 2^1L_{(1)} + 2^0L_{(0)} \right) \\
&\quad +  W_{4,2}\left( 2^1L_{(0,1)} + 2^2L_{(2,0)} + 2^1L_{(1,0)}+ 2^0L_{(0,0)}\right)  \\
&\quad + W_{4,3}\left( 2^1L_{(0,0,1)} + 2^2L_{(1,1,0)} + 2^3L_{(3,0,0)}  \right)  \\
& = 25\cdot 48 + \frac{725}{36}72+\frac{25}{16}144 = 2875.
\end{align*}
Notice that the diagrams represent the cycle type but the position of each cycle in the columns can vary so the ones used are just illustrative.


\section{Proof of the Schubert calculus formula}\label{sec:Schubert}

J. Harris obtains the expression \eqref{eq:Harris} in \cite{Harris79} using the splitting principle with Chern classes.
We review here this procedure deriving a similar general formula for $C_n$, Equation \eqref{eq:SchubertCn}, and establishing corollaries \ref{cor:recursion} and \ref{cor:generatingfunction}. We only use the splitting principle for the top Chern class of the $\text{Sym}^{2n-3}(S^\ast)$ bundle over the Grassmannian of projective lines $\mathbb{G}r(1,n)$, and the intersection properties of the basic Schubert cycles $\sigma_1, \sigma_{1,1}$. For all the relevant intersection theory and Schubert calculus via modern algebraic geometry see, e.g., \cite{32642016, Fulton1984}. For a review of results on the Fano scheme of lines, see \cite{ciliberto2020lines}.

By the splitting principle (see \cite[ch. 5]{32642016}) for the 2-rank bundle $S^\ast$, one can assume it is a direct sum of two line bundles, $S^\ast\cong V_1\oplus V_2$, with formal Chern classes $c_1(V_1)=\alpha,c_1(V_2)=\beta$, so by Whitney's formula its total Chern class can formally be expressed as 
$$
c(S^\ast)=c(V_1)\cdot c(V_2)=(1+\alpha)(1+\beta)=1+ (\alpha+\beta)+ \alpha\beta,
$$
where we can identify the Schubert cycles $\sigma_1 = \alpha+\beta, \sigma_{1,1}=\alpha\beta$, because of the identity $c(S^\ast) = 1 +\sigma_1+\sigma_{1,1}$, cf. \cite[Sec. 5.6.2]{32642016}.
%
%
Now, for a 2-dimensional vector space $V=V_1\oplus V_2$ we have the following decomposition:
$$
\Sym^{2n-3}(V) = \bigoplus_{k=0}^{2n-3}\left( V_1^{\otimes(2n-3-k)}\otimes V_2^{\otimes k} \right).
$$
Using Whitney's formula again and that $c_1(V_1\otimes V_2)=c_1(V_1)+c_1(V_2)$, the total Chern class of this bundle is formally
$$
 c(\Sym^{2n-3}(S^\ast)) = \prod_{k=0}^{2n-3}(1 + (2n-3-k)\alpha + k\beta),
$$
and so the top Chern class we are interested in is given by the product:
$$
 c_{2n-2}(\Sym^{2n-3}(S^\ast)) = \prod_{k=0}^{2n-3}[(2n-3-k)\alpha + k\beta] =  (2n-3)^2\alpha\beta\prod_{k=1}^{2n-4}[(2n-3-k)\alpha + k\beta].
$$
Noticing that the last product has an even number of factors that can be paired up, and that $\alpha^2+\beta^2 = \sigma_1^2-2\sigma_{1,1}$, we arrive at a product of $n-2$ pairs of factors. Using the expansion of products of binomials in terms of sums over compositions we get the following, where we recall that $\mathcal{P}_{n-2}$ is the set of 2-compositions of $[n-2]=\{1,2,\dots,n-2\}$:
\begin{align*}
 & c_{2n-2}(\Sym^{2n-3}(S^\ast)) = (2n-3)^2\alpha\beta\prod_{k=1}^{n-2}[(2n-3-k)\alpha + k\beta][k\alpha + (2n-3-k)\beta] \\
 & = (2n-3)^2\sigma_{1,1}\prod_{k=1}^{n-2}[(2n-3-k)k(\sigma_1^2-2\sigma_{1,1}) + ((2n-3-k)^2+k^2)\sigma_{1,1}] \\
 & = (2n-3)^2\sigma_{1,1}\sum_{(I,J)\in\mathcal{P}_{n-2}} \prod_{i\in I}[(2n-3-i)i(\sigma_1^2-2\sigma_{1,1})]\prod_{j\in J}[((2n-3-j)^2+j^2)\sigma_{1,1}].
\end{align*}
By collecting terms in the sum that have the same $|I|=k$, for $k=0,\dots, n-2$, we can take common factors and explicitly carry out the product of the Schubert cycles using the binomial expansion:
\begin{align*}
 & = (2n-3)^2\sigma_{1,1}\sum_{k=0}^{n-2}\sum_{\substack{(I,J)\in\mathcal{P}_{n-2}\\ |I|=k }} \left[\prod_{i\in I}(2n-3-i)i\right](\sigma_1^2-2\sigma_{1,1})^{k}\left[\prod_{j\in J}((2n-3-j)^2+j^2)\right]\sigma_{1,1}^{n-2-k} \\
 & = (2n-3)^2\sum_{k=0}^{n-2}\sum_{\substack{(I,J)\in\mathcal{P}_{n-2}\\ |I|=k }} \left[\prod_{i\in I}(2n-3-i)i\prod_{j\in J}((2n-3-j)^2+j^2)\right]\sum_{m=0}^{k}\binom{k}{m}(-2)^{k-m}\sigma_1^{2m}\sigma_{1,1}^{n-1-m}.
\end{align*}

We have to evaluate $\sigma_1^{2m}\sigma_{1,1}^{n-1-m}$ to a multiple of the top Chern class $\sigma_{n-1,n-1}$, for any values of the indices. One has that these intersection products are given by the Catalan numbers:
\begin{equation}\label{eq:CatalanSchubert}
\sigma_1^{2m}\sigma_{1,1}^{n-1-m} = \frac{1}{m+1}\binom{2m}{m}\sigma_{n-1,n-1}.
\end{equation}
This can be proved by following the discussion in \cite[p. 149]{32642016} on how the products by $\sigma_1$ work in the diagram provided in the said reference, represented for $n=4$:
\[\begin{tikzcd}
	{\sigma_0} \\
	{\sigma_1} \\
	{\sigma_2} & {\sigma_{1,1}} \\
	{\sigma_3} & {\sigma_{2,1}} \\
	{\sigma_4} & {\sigma_{3,1}} & {\sigma_{2,2}} \\
	& {\sigma_{4,1}} & {\sigma_{3,2}} \\
	&& {\sigma_{4,2}} & {\sigma_{3,3}} \\
	&&& {\sigma_{4,3}} \\
	&&&& {\sigma_{4,4}}
	\arrow[from=2-1, to=1-1]
	\arrow[from=3-2, to=2-1]
	\arrow[from=3-1, to=2-1]
	\arrow[from=4-1, to=3-1]
	\arrow[from=4-2, to=3-1]
	\arrow[from=4-2, to=3-2]
	\arrow[from=5-3, to=4-2]
	\arrow[from=5-2, to=4-2]
	\arrow[from=5-2, to=4-1]
	\arrow[from=6-3, to=5-3]
	\arrow[from=6-3, to=5-2]
	\arrow[from=7-4, to=6-3]
	\arrow[from=5-1, to=4-1]
	\arrow[from=6-2, to=5-1]
	\arrow[from=7-3, to=6-2]
	\arrow[from=7-3, to=6-3]
	\arrow[from=6-2, to=5-2]
	\arrow[from=8-4, to=7-3]
	\arrow[from=8-4, to=7-4]
	\arrow[from=9-5, to=8-4]
\end{tikzcd}\]

On the one hand, we have $\sigma_{1,1}^{n-1-m}= \sigma_{n-1-m,n-1-m}$ by \cite[Prop. 4.11]{32642016}, and on the other hand the product by $\sigma_1^{2m}$ yields the top class $\sigma_{n-1,n-1}$ by following each possible path connecting the top class (at the bottom of the diagram) with $\sigma_{n-1-m,n-1-m}$ (at one of the right corners). Therefore, one obtains the top class with a coefficient given by the counting of such paths in the diagram, which after a standard combinatorial argument (see \cite[Ex. 6.19h]{StanleyFomin1999}) is known to be the Catalan numbers.

Finally, let us simplify the sum over compositions of the products in square brackets, call it $A_{n,k}$,
introducing arbitrary variables $x,y$, such that:
$$
x^ky^{n-2-k}A_{n,k} = \sum_{\substack{(I,J)\in\mathcal{P}_{n-2}\\ |I|=k }}\prod_{i\in I}x(2n-3-i)i\prod_{j\in J}y((2n-3-j)^2+j^2).
$$
Then, writing $[x^ky^{n-2-k}]$ for the operator of extracting the coefficient of that monomial in a polynomial, we have:
\begin{align*}
A_{n,k} & = [x^ky^{n-2-k}]\sum_{\substack{(I,J)\in\mathcal{P}_{n-2}}}\prod_{i\in I}x(2n-3-i)i\prod_{j\in J}y((2n-3-j)^2+j^2) \\
& = [x^ky^{n-2-k}] \prod_{k=1}^{n-2}\left[ x(2n-3-k)k + y((2n-3-k)^2+k^2) \right] 
\end{align*}
\begin{align*}
& = [x^ky^{n-2-k}] \prod_{k=1}^{n-2}\left[ y(2n-3)^2 + (x-2y)(2n-3-k)k \right] \\
& = [x^ky^{n-2-k}]\sum_{\substack{(I,J)\in\mathcal{P}_{n-2}}}\prod_{i\in I}y(2n-3)^2\prod_{j\in J}(x-2y)(2n-3-j)j \\
& = [x^ky^{n-2-k}]\sum_{\substack{(I,J)\in\mathcal{P}_{n-2}}}y^{|I|}(2n-3)^{2|I|}(x-2y)^{|J|}\prod_{j\in J}(2n-3-j)j \\
& = [x^ky^{n-2-k}]\sum_{\substack{(I,J)\in\mathcal{P}_{n-2}}}y^{|I|}(2n-3)^{2(n-2-|J|)}\sum_{l=0}^{|J|}\binom{|J|}{l}x^l(-2y)^{|J|-l} \prod_{j\in J}(2n-3-j)j \\
& = \sum_{\substack{(I,J)\in\mathcal{P}_{n-2}\\ |J|\geq k}}(2n-3)^{2n-4-2|J|}\binom{|J|}{k}(-2)^{|J|-k} \prod_{j\in J}(2n-3-j)j \\
& = \sum_{t\geq k}^{n-2}(-2)^{t-k} (2n-3)^{2n-4-2t}\binom{t}{k} \sum_{\substack{(I,J)\in\mathcal{P}_{n-2}\\ |J| = t}} \prod_{j\in J}(2n-3-j)j,
\end{align*}
and the latter sum is precisely the elementary symmetric polynomial $e_t$ evaluated at the numbers $\{(2n-3-j)j\}_{j=1}^{n-2}$. We are left with the expression
$$
C_n = (2n-3)^{2n-2}\sum_{k=0}^{n-2}\sum_{t=k}^{n-2}\sum_{m=0}^{k}\binom{k}{m}\binom{t}{k}K_m(-2)^{t-m}e_t\left( \Gamma_{n-2} \right),
$$
where $\Gamma_{n-2}=\left\{\frac{(2n-3-j)j}{(2n-3)^2}\right\}_{j=1}^{n-2}$. Notice that the three sums can be reordered by $\sum_{k=0}^{n-2}\sum_{t=k}^{n-2}\sum_{m=0}^{k} = \sum_{t=0}^{n-2}\sum_{m=0}^t\sum_{k=m}^t $, so that we can apply the identity
\begin{equation}\label{eq:twobinomials}
\sum_{k=m}^{t}\binom{k}{m}\binom{t}{k} = 2^{t-m}\binom{t}{m},
\end{equation}
using that $\binom{k}{m}\binom{t}{k}=\binom{t}{m}\binom{t-m}{k-m}$, \cite[Eq. 5.21]{graham1994concrete}. The result is the formula
\begin{equation}\label{eq:semifinalCn}
C_n = (2n-3)^{2n-2}\sum_{t=0}^{n-2}e_{t}(\Gamma_{n-2})\sum_{m=0}^t\binom{t}{m}(-4)^{t-m}K_m.
\end{equation}
But the numbers $u_t = \sum_{m=0}^t\binom{t}{m}(-4)^{t-m}K_m$ are the generalized binomial transform of the Catalan numbers, which is of the form
$$
u_k=\sum_{m=0}^k\binom{k}{m}a^m(-c)^{k-m}b_m, \text{ with } a=1, c=4, b_m = K_m,
$$
for which the generating function $Z(x)=\sum_{k=0}^\infty u_kx^k$ has an expression in terms of the generating function $B(x)=\sum_{n=0}^\infty b_nx^n$, given by $Z(x)=\frac{1}{1+cx}B(\frac{ax}{1+cx})$, see \cite{Prodinger94}. The generating function of the Catalan numbers is known to be
$$
K(x) = \frac{1}{2x}\left( 1- \sqrt{1-4x} \right),
$$
and so the generating function $Z(x)=\sum_{k=0}^\infty u_k x^k$ is
$$
Z(x) = \frac{1}{1+4x}K\left( \frac{x}{1+4x} \right) = \frac{1}{2x}\left( 1-\frac{1}{\sqrt{1+4x}} \right).
$$ 
Therefore, $u_t$ is the coefficient of $x^{t}$ in $Z(x)$, so recalling that
$$
(1+x)^{-1/2} = \sum_{m=0}^\infty \frac{(-1)^m(2m)!}{4^m m!^2}x^m,
$$
we finally arrive at Equation \eqref{eq:SchubertCn} by extracting the coefficient from $Z(x)$ and using the definition of Catalan number in
$$
u_t = [x^t]Z(x)=[x^t]\left( \frac{1}{2x}-\sum_{m=0}^\infty \frac{(-1)^m(2m)!}{m!^22}x^{m-1} \right) = (-1)^{t+2}\frac{(2(t+1))!}{2(t+1)!^2} = (-1)^t(2t+1)K_t.
$$

In order to obtain the inhomogeneous recursion relations with variable coefficients \eqref{eq:recursion}, it is sufficient to write \eqref{eq:SchubertCn} as
$$
C_n = \sum_{k=0}^{n-2}\alpha_{n,k}u_k, \text{ with } \alpha_{n,k} = (2n-3)^{2n-2}e_k(\Gamma_{n-2}),\quad n=2,3,\dots,\; k=0,1,\dots,n-2.
$$
Then $\alpha_{n,k}$ defines the nonzero entries of an infinite lower triangular matrix $A=[A_{ij}]$, such that one can sequentially solve the values of $w_k$ in terms of $C_{k+2}$ and its predecessors:
\begin{align*}
	& C_2 = A_{00}w_0 \Rightarrow w_0 = \frac{1}{A_{00}}C_2, \\
	& C_3 = A_{10}w_0 + A_{11}w_1 = \frac{A_{10}}{A_{00}}C_2 + A_{11}w_1 \Rightarrow w_1 = \frac{1}{A_{11}}\left( C_3 - \frac{A_{10}}{A_{00}}C_2 \right) , \\
	& C_4 = A_{20}w_0 + A_{21}w_1 + A_{22}w_2 = \left( \frac{A_{20}}{A_{00}}-\frac{A_{21}A_{10}}{A_{11}A_{00}} \right)C_2 + \frac{A_{21}}{A_{11}}C_3 + A_{22}w_2, \\
 & C_5 = \frac{A_{30}}{A_{00}}C_2 + \frac{A_{31}}{A_{11}}\left( C_3 - \frac{A_{10}}{A_{00}}C_2 \right) + \frac{A_{32}}{A_{22}}\left( C_4 -\frac{A_{20}}{A_{00}}C_2 - \frac{A_{21}}{A_{11}}\left( C_3-\frac{A_{10}}{A_{00}}C_2 \right) \right) + A_{33}w_3, \\
	& = \left( \frac{A_{30}}{A_{00}}-\frac{A_{31}A_{10}}{A_{11}A_{00}} - \frac{A_{32}}{A_{22}}\left( \frac{A_{20}}{A_{00}}-\frac{A_{21}A_{10}}{A_{11}A_{00}} \right) \right)C_2 + \left( \frac{A_{31}}{A_{11}}-\frac{A_{32}A_{21}}{A_{22}A_{11}} \right) C_3 + \frac{A_{32}}{A_{22}}C_4 + A_{33}w_3\\
	& ...
\end{align*}
Collecting terms and keeping track of the indices, one arrives at Equation \eqref{eq:recursioncoeff} for the coefficient $B_{n,m}$ of $C_m, 2\leq m \leq n-1$, establishing Corollary \ref{cor:recursion}.

The diagonal of $A$ is always nonzero since it corresponds to $\alpha_{n,n-2}=(2n-3)^{2n-2}e_{n-2}(\Gamma_{n-2})$, for $n\geq 2$, thus any $N\times N$ upper-left block of $A$ is invertible, resulting in an infinite lower triangular matrix $[\theta_{n,k}]$. Inverting the relation \eqref{eq:SchubertCn} in terms of this matrix with the infinite lower triangular matrix $A^{-1}$, we obtain $u_n = \sum_{k=0}^n\theta_{n,k}C_{k+2}$. Hence, using the representation of these numbers from the generating function $Z(x)$, we have established Corollary \ref{cor:generatingfunction}.


\section{Proof of the Bombieri norm formula}\label{sec:Bombieri}

Our starting point is the following major result from random algebraic geometry by Basu-Lerario-Lundberg-Peterson \cite[Th. 7]{Basu2019}. Let $w=(w_1,\dots,w_{2n-3})\in\mathbb{C}^{2n-3}$ be a random vector with entries that are independent Gaußian variables distributed as:
$$
w_j \sim N_{\mathbb{C}}\left( 0, \binom{2n-4}{j-1} \right),\quad j =1,\dots, 2n-3.
$$
Let $w^{(1)},\dots,w^{(n-1)}$ be independent random vectors all distributed as $w$, and define the random $(2n-2)\times(2n-2)$ matrix $\hat{J}^{\mathbb{C}}_n$ as:
$$
\hat{J}^{\mathbb{C}}_n = 
\begin{bmatrix}
w^{(1)}_1 & 0 & \dots & w^{(n-1)}_1 & 0 \\
w^{(1)}_2 & w^{(1)}_1 & ~ & w^{(n-1)}_2 & w^{(n-1)}_1  \\
\vdots & w^{(1)}_2 & ~ & \vdots & w^{(n-1)}_2  \\
w^{(1)}_{2n-3} & \vdots & ~ & w^{(n-1)}_{2n-3} & \vdots  \\
0 & w^{(1)}_{2n-3} & \dots & 0 & w^{(n-1)}_{2n-3}
\end{bmatrix}
$$
\begin{thm}[\cite{Basu2019}]
	The number $C_n$ of lines on a generic hypersurface of degree $2n-3$ in $\mathbb{CP}^n$ is:
\begin{equation}\label{eq:BasuEq}
	C_n = \left( \frac{(2n-3)^{2n-2}}{\Gamma(n)\Gamma(n+1)}\prod_{k=0}^{2n-3}\binom{2n-3}{k}^{-1} \right)\mathbb{E}\vert \det\hat{J}^{\mathbb{C}}_n \vert^2.
\end{equation}
\end{thm}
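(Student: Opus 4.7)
The plan is to invoke the complex Kac-Rice formula for the Bombieri (Kostlan) Gaussian ensemble on $\mathrm{Sym}^{2n-3}(\mathbb{C}^{n+1})^{*}$, reduce to a single base point via $U(n+1)$-invariance, and identify the holomorphic Jacobian there with the matrix $\hat{J}^{\mathbb{C}}_{n}$. The starting input is that a generic section of this ensemble almost surely cuts out a hypersurface whose Fano variety of lines is a transverse zero-dimensional scheme of length $C_n$ (by the Schubert-calculus count reviewed in Equation \eqref{eq:Chern}), so the expected number of lines equals $C_{n}$ and it only remains to compute this expectation explicitly.

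Concretely, I would cover a dense chart of $\mathbb{G}r(1,n)$ by $(\mathbf{t},\mathbf{s})\in\mathbb{C}^{n-1}\times\mathbb{C}^{n-1}$, representing the line spanned by $P(\mathbf{t})=(1,0,t_{2},\dots,t_{n})$ and $Q(\mathbf{s})=(0,1,s_{2},\dots,s_{n})$. Restricting a random $f$ to points $uP(\mathbf{t})+vQ(\mathbf{s})$ gives
\begin{equation*}
f(uP+vQ)=\sum_{i=0}^{2n-3}F_{i}(\mathbf{t},\mathbf{s})\,u^{i}v^{2n-3-i},
\end{equation*}
so the line lies on $\{f=0\}$ iff all $2n-2$ coefficients $F_{i}$ vanish, and the count of such lines is the number of common zeros of the holomorphic map $F\colon\mathbb{C}^{2n-2}\to\mathbb{C}^{2n-2}$. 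The complex Kac-Rice formula gives
\begin{equation*}
\mathbb{E}[\#\{F=0\}]=\int_{\mathbb{C}^{2n-2}}\mathbb{E}\bigl[|\det J_{F}|^{2}\,\big|\,F=0\bigr]\,\rho_{F(\mathbf{t},\mathbf{s})}(0)\,d\mathbf{t}\,d\mathbf{s},
\end{equation*}
and by $U(n+1)$-invariance of the Bombieri Gaussian together with the transitive action on lines, the integrand (pulled back against the invariant volume form) is constant along the dense orbit, so the integral collapses to the value at the base point $(\mathbf{t},\mathbf{s})=0$ multiplied by $\mathrm{Vol}(\mathbb{G}r(1,n))=\pi^{2n-2}/[\Gamma(n)\Gamma(n+1)]$ for the standard Fubini-Study normalization.

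At the base line $\{x_{2}=\cdots=x_{n}=0\}$, the monomials contributing to $F_{i}(0)$, to $\partial_{t_{k}}F_{i}(0)$, and to $\partial_{s_{k}}F_{i}(0)$ carry respectively no, one, or one power of $x_{k}$, with pairwise distinct bi-exponents on $(x_{0},x_{1})$. By the monomial-orthogonality of the Bombieri inner product, $F(0)$ is therefore independent of $J_{F}(0)$, and conditioning on $F(0)=0$ preserves the Jacobian law. A direct multinomial computation then shows that $F_{i}(0)\sim N_{\mathbb{C}}(0,\binom{2n-3}{i})$, so
\begin{equation*}
\rho_{F}(0)=\pi^{-(2n-2)}\prod_{k=0}^{2n-3}\binom{2n-3}{k}^{-1},
\end{equation*}
and that $\partial_{t_{k}}F_{i}(0)$ coincides (as a random variable) with $\partial_{s_{k}}F_{i-1}(0)$ because both extract the same Bombieri coefficient of $x_{0}^{i-1}x_{1}^{2n-2-i}x_{k}$. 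This coincidence reproduces exactly the paired shifted-column Toeplitz structure of $\hat{J}_{n}^{\mathbb{C}}$. Each such entry has variance $(2n-3)\binom{2n-4}{j-1}$; pulling the common factor $\sqrt{2n-3}$ out of each of the $2n-2$ columns contributes $(2n-3)^{2n-2}$ to $|\det J_{F}|^{2}$ and leaves residual entries with variances $\binom{2n-4}{j-1}$, matching the distribution of $w$.

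Multiplying the Grassmannian volume $\pi^{2n-2}/[\Gamma(n)\Gamma(n+1)]$, the density $\rho_{F}(0)$, and the column-rescaling factor $(2n-3)^{2n-2}$ produces the claimed prefactor. The main obstacle is the careful normalization bookkeeping in this final step: one must verify that the pullback of the Fubini-Study volume to the affine chart exactly cancels the $\pi^{2n-2}$ appearing in $\rho_{F}(0)$, up to the Gamma-factor volume of $\mathbb{G}r(1,n)$, and that the Kac-Rice integrand is genuinely $U(n+1)$-invariant rather than merely invariant along a dense orbit. The identification of the Toeplitz matrix structure and the independence of $F(0)$ from $J_{F}(0)$ are conceptually clean consequences of monomial orthogonality; it is the precise accounting of the multinomial, chain-rule, and volume constants that requires care.
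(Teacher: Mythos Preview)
The paper does not prove this theorem at all: it is quoted verbatim as \cite[Th.~7]{Basu2019} and used as a black-box input to Section~\ref{sec:Bombieri}. There is therefore no in-paper argument to compare your proposal against.

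That said, your sketch is a faithful outline of the original Basu--Lerario--Lundberg--Peterson proof: Kac--Rice on the Grassmannian, transitivity of the $U(n+1)$-action to reduce to a single base line, independence of $F(0)$ and $J_F(0)$ via monomial orthogonality of the Kostlan inner product, and identification of the shifted-column Toeplitz structure of the Jacobian. You have also correctly flagged the genuine delicate point, namely the normalization bookkeeping between the Fubini--Study volume, the Gaussian density, and the column rescaling. If you intend this as a self-contained proof rather than a summary, the one place that still needs a line of justification is the passage from ``integrand is $U(n+1)$-invariant on a dense orbit'' to ``integral equals value at base point times volume'': you should check that the Kac--Rice density you wrote down is the pushforward of an honest invariant top form on $\mathbb{G}r(1,n)$, not merely that the pointwise expression is constant on the chart.
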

The authors explain, \cite[Rmk. 6]{Basu2019}, that considering the polynomial determinant $P_n(x)$ of the matrix
\begin{equation}\label{eq:randommatrix}
A_n(x) = 
\begin{bmatrix}
x_{1,1} & 0 & \dots & x_{n-1,1} & 0 \\
\vdots & x_{1,1} & ~ & \vdots & x_{n-1,1}  \\
\binom{2n-4}{j-1}^{1/2}x_{1,j} & \vdots & ~ & \binom{2n-4}{j-1}^{1/2}x_{n-1,j} & \vdots  \\
\vdots & \binom{2n-4}{j-1}^{1/2}x_{1,j} & ~ & \vdots & \binom{2n-4}{j-1}^{1/2}x_{n-1,j}  \\
x_{1,2n-3} & \vdots & ~ & x_{n-1,2n-3} & \vdots \\ 
0 & x_{1,2n-3} & \dots & 0 & x_{n-1,2n-3}
\end{bmatrix}
\end{equation}
then the expectation value of the determinant in \eqref{eq:BasuEq} is given in terms of the Bombieri norm of $P_n(x)$:
\begin{equation}\label{eq:expectation}
	\mathbb{E}\vert \det\hat{J}^{\mathbb{C}}_n \vert^2 = (2n-2)!\; ||P_n||^2_B.
\end{equation}
Recall that, for a homogeneous polynomial of degree $D$ in $N$ variables, $P(\underline{x})=\sum_{|\underline{\alpha}|=D}P_{\underline{\alpha}}x_1^{\alpha_1}\cdots x_N^{\alpha_N}$, the Bombieri norm is
\begin{equation}\label{eq:Bnorm}
	||P(\underline{x})||_B = \sqrt{ \sum_{|\underline{\alpha}|=D}|P_{\underline{\alpha}}|^2\frac{\alpha_1!\cdots\alpha_N!}{D!} }.
\end{equation}
We shall compute explicitly the Bombieri norm of $P_n(x)$ and reveal its combinatorial structure, for arbitrary $n\geq 2$, in order to obtain the expansion formula \eqref{eq:BombieriCn} from Equation \eqref{eq:BasuEq}.
Let us first define convenient symbols to work with the matrix $A_n(x)$. Let us write $[i|j]=x_{i,j}$ for convenience of manipulating indices, and define:
\begin{equation*}
	\eta_i = \binom{2n-4}{i-1}^{1/2},
\end{equation*}
the symbols $\cancel{\delta}_2(k) = k\mod 2,\; \delta_2(k)=1-\cancel{\delta}_2(k)$, which are $1$ if $k$ is odd or even respectively, and zero otherwise, and the opposite of Kronecker's delta $\cancel{\delta}_{i,j}=1-\delta_{i,j}$. Then the entries of the matrix $A_n(x)$ can be written as
\begin{equation}\label{eq:matrixA}
	[A_n(x)]_{i,k} = \cancel{\delta}_2(k)\cancel{\delta}_{2n-2,i}\cdot \eta_i\left[\frac{k+1}{2}\;|\;i\right] + \delta_2(k)\cancel{\delta}_{1,i}\cdot\eta_{i-1}\left[\frac{k}{2}\;|\; i-1\right].
\end{equation}
Notice that the indices are well-defined integers precisely because the delta symbols introduced are only nonzero for the correct parity.
Therefore the determinant is:
\begin{equation*}\label{eq:detA}
	P_n(x) = \sum_{\sigma\in S_{2n-2}}(-1)^{\sigma}\prod_{i=1}^{2n-2} A_{i,\sigma(i)} 
	= \sum_{\sigma\in S_{2n-2}}(-1)^{\sigma}\prod_{i=1}^{2n-2} (u_i(\sigma) + w_i(\sigma)),
\end{equation*}
where
\begin{equation*}
u_i(\sigma) =  \cancel{\delta}_2(\sigma(i))\cancel{\delta}_{2n-2,i}\cdot \eta_i\left[\frac{\sigma(i)+1}{2}\;|\;i\right], \quad\quad
w_i(\sigma) = \delta_2(\sigma(i))\cancel{\delta}_{1,i}\cdot\eta_{i-1}\left[\frac{\sigma(i)}{2}\;|\; i-1\right].
\end{equation*}
Then, expanding the product as a sum over all 2-compositions $(I,J)\in\mathcal{P}_{2n-2}$ of the set $[2n-2]$, we arrive at:
$$
P_n(x) = \sum_{\sigma\in S_{2n-2}}(-1)^\sigma \sum_{(I,J)\in\mathcal{P}_{2n-2}}\prod_{i\in I}u_i(\sigma)\prod_{j\in J}w_j(\sigma),
$$
and the latter products include the product of the delta symbols, which make the terms in the sum to be nonzero if and only if
$$
\prod_{i\in I} \cancel{\delta}_2(\sigma(i))\cancel{\delta}_{2n-2,i} \prod_{j\in J}\delta_2(\sigma(j))\cancel{\delta}_{1,j} \neq 0 \Leftrightarrow 
\begin{cases} 
\sigma(i)\equiv 1\mod 2,\text{ and }i\neq 2n-2, \forall i\in I, \\
\sigma(j)\equiv 0\mod 2,\text{ and }j\neq 1, \forall j\in J.
\end{cases}
$$

Therefore, for a fixed composition $(I,J)$ the only permutations $\sigma\in S_{2n-2}$ that yield nonzero terms in the expression are those which send the indices $I$ to odd numbers and $J$ to even numbers, which can only happen for the compositions of equal length $|I|=|J|=n-1$. Equivalently, for any fixed permutation the compositions that yield nonzero products are those of length $n-1$. At the same time, the compositions must satisfy that $1\in I$ and $2n-2\in J$. So we can interchange the summations, remove the delta symbols, and write:
$$
P_n(x)=\sum_{\substack{(I,J)\in\mathcal{P}_{2n-2}\\ |I|=|J|=n-1}} \sum_{\substack{\sigma\in S_{2n-2}\\ \sigma|_I\equiv 1\!\!\!\!\mod 2\\ \sigma|_J\equiv 0\!\!\!\!\mod 2}} (-1)^\sigma\prod_{i\in I} \eta_i[\hat{\sigma}(i)\,|\, i] \prod_{j\in J} \eta_{j-1}[\tilde{\sigma}(j)\,|\, j-1],
$$
here $\hat{\sigma}(i)=\frac{\sigma(i)+1}{2},\; \tilde{\sigma}(j)=\frac{\sigma(j)}{2}$, are numbers in $[n-1]$. Then for all possible permutations of interest in $S_{2n-2}$, the output of each $\hat{\sigma},\tilde{\sigma}$ covers all possible permutations of $S_{n-1}\times S_{n-1}$.

The products of the last expression represent monomials of order $2n-2$ in the variables $x_{m,n}$ for $m=1,\dots, n-1,\; n = 1,\dots, 2n-3$. Reindexing the product in $j$ to $j-1$, and noticing that the product of the $\eta$'s is independent of the permutation, we separate the factors that have a common second index, i.e., when $i=j-1$ (which happens for the subset of indices $H=I\cap(J-1)$):
\begin{align}\label{eq:finalDet}
& P_n(x) =\sum_{\substack{(I,J)\in\mathcal{P}_{2n-2}\\ |I|=|J|=n-1}}\prod_{i\in I} \eta_i\prod_{j\in J} \eta_{j-1} \!\!\!\sum_{\substack{\sigma\in S_{2n-2}\\ \sigma|_I\equiv 1\!\!\!\!\mod 2\\ \sigma|_J\equiv 0\!\!\!\!\mod 2}}\!\!\! (-1)^\sigma\prod_{i\in I} [\hat{\sigma}(i)\,|\, i] \prod_{j\in J-1}[\tilde{\sigma}(j+1)\,|\, j] \\
& = \sum_{h=1}^{n-1} \sum_{\substack{(I,J)\in\mathcal{P}_{2n-2}\\ |I|=|J|=n-1\\ |I\cap(J-1)|=h}} \!\!\!\beta(I,J) \!\!\!\sum_{\substack{\sigma\in S_{2n-2}\\ \sigma|_I\equiv 1\!\!\!\!\mod 2\\ \sigma|_J\equiv 0\!\!\!\!\mod 2}}\!\!\! (-1)^\sigma\prod_{i\in I\backslash H} [\hat{\sigma}(i)\,|\, i] \prod_{j\in (J-1)\backslash H}[\tilde{\sigma}(j+1)\,|\, j]\prod_{k\in H} [\hat{\sigma}(k)\,|\, k][\tilde{\sigma}(k+1)\,|\, k]. \nonumber
\end{align}
Here, we have also decomposed the sum over compositions by the size of $h=|H|=|I\cap(J-1)|$, so that we finally get the $h$-special compositions of $[2n-2]$. We have defined as well
$$
\beta(I,J) = \prod_{i\in I} \eta_i\prod_{j\in J} \eta_{j-1}.
$$

With the last formula for $P_n(x)$ the $z$-blocks make their appearance. We must compute the Bombieri norm of this polynomial, and the monomials appearing in it are encoded by the three products in \eqref{eq:finalDet}. We must isolate which different monomials appear and with what coefficient, then apply Equation \eqref{eq:Bnorm}. The first two products always run over different indices by definition of composition and of $H$, so they can never produce factors of $x_{m,n}$ with higher order than 1. However, the product over $H$ can produce squares precisely when $\hat{\sigma}(k)=\tilde{\sigma}(k+1)$. This leads us to organize the factors of the three products precisely in a $z$-block, since this separates two legs without coincidences, (representing $[\hat{\sigma}(i)\,|\, i]$ and $[\tilde{\sigma}(j+1)\,|\, j]$), and a bulk of common columns with two rows (representing $[\hat{\sigma}(k)\,|\, k][\tilde{\sigma}(k+1)\,|\, k]$). For example when $|I\backslash H|=|(J-1)\backslash H|=1$:
$$
\begin{matrix}
	[\hat{\sigma}(i)\,|\, i] & [\hat{\sigma}(k_1)\,|\, k_1] \dots [\hat{\sigma}(k_h)\,|\, k_h] \\
	&  [\tilde{\sigma}(k_1+1)\,|\, k_1] \dots [\tilde{\sigma}(k_h+1)\,|\, k_h] & [\tilde{\sigma}(j+1)\,|\, j]
\end{matrix}
$$
is represented by the $z$-block:
$$
\ytableausetup
{boxsize=1.15em}
\ytableausetup
{aligntableaux=center}
\begin{ytableau}
 ~ & ~ & \none[\dots] & ~ &\none \\
\none & ~ & \none[\dots] & ~ & ~ 
\end{ytableau}
$$

Recalling the definitions of Section \ref{sec:main}, a squared variable $[m|n]^2=x^2_{m,n}$ will appear in the product over $H$ precisely when there is a 1-cycle in the corresponding $z$-block, i.e., $\hat{\sigma}(k_1)=\tilde{\sigma}(k_1+1)$. A $2$-cycle in the block corresponds to a pair of indices $k_1, k_2$ in $H$ such that $\hat{\sigma}(k_1)=\tilde{\sigma}(k_2+1)$ and $\tilde{\sigma}(k_1+1)=\hat{\sigma}(k_2)$, and similarly for longer cycles. This is crucial because every higher power than 1 in any $x^{\alpha_{m,n}}_{m,n}$ of a monomial will contribute to the Bombieri norm of that monomial with an additional factor $\alpha_{m,n}!$. Hence, if the $z$-block representing a monomial has $\lambda_1$ 1-cycles, that monomial contribution to the Bombieri norm has a factor of $2!^{\lambda_1}$. Notice that the square power is the only possible higher power for every factor of our fixed monomial because the product over $H$ is by pairs only, i.e., the $z$-block has only two rows. We have thus determined that
$$
\frac{\alpha_1!\cdots\alpha_N!}{D!} = \frac{2^{\lambda_1}}{(2n-2)!}
$$
and the denominator will cancel with the same factorial in Equation \eqref{eq:expectation}. We then need to determine the coefficient $|P_{\underline{\alpha}}|^2$ where $P_{\underline{\alpha}}$ is the number of monomials of the same indices in our overall sum.

In order to determine this coefficient, one must count how many $h$-special composition $(I,J)$ and permutations $\sigma$ with the given constraints yield the same monomial. First of all, this corresponds to permutations with fixed outputs at $\hat{\sigma}(i)$ and $\tilde{\sigma}(j+1)$ for $i\in I\backslash H, j\in(J-1)\backslash H$, since those variables of the monomial are already different for a given $(I,J)$. After a moment of reflection, one realizes the same monomial can be produced by two permutations $\sigma, \rho$ such that they interchange top and bottom in \emph{the same columns} of the bulk of the corresponding $z$-block, leaving the rest of the labeling fixed, i.e.:
$$
\ytableausetup
{boxsize=1.15em}
\ytableausetup
{aligntableaux=center}
\begin{ytableau}
~ & \none[\dots] & ~ & a & c & \none[\dots] & e & g & \none &\none \\
\none & \none & \none & b & d & \none[\dots] & f & h & ~ & \none[\dots] & ~
\end{ytableau}
=
\ytableausetup
{boxsize=1.15em}
\ytableausetup
{aligntableaux=center}
\begin{ytableau}
~ & \none[\dots] & ~ & b & d & \none[\dots] & f & h & \none &\none \\
\none & \none & \none & a & c & \none[\dots] & e & g & ~ & \none[\dots] & ~
\end{ytableau}
$$

Let us digress for a moment to study when this happens and why we can focus on the counting disregarding the signature.
Notice the important fact that each row of the $z$-block is a permutation of $S_{n-1}$, since $\hat{\sigma}$ and $\tilde{\sigma}$ are exactly defined to output all possible permutations of $[n-1]$ when $\sigma\in S_{2n-2}$. But the values in the bulk may not coincide completely, i.e., if the bulk has size $h$, only some $t\leq h$ values from $[n-1]$ may be \emph{common values} to the top and bottom rows of the bulk, perhaps in different order. For example, in
$$
\ytableausetup
{boxsize=1.15em}
\ytableausetup
{aligntableaux=center}
\begin{ytableau}
1 & 2 & 3 & *(black!10) 4 &*(black!10) 5 &*(black!10) 6 &*(black!10) 7 & 8 & 9 & 10 &\none &\none \\
\none~ & \none &\none & 1 &  2 &  3 & *(black!10) 4 & *(black!10) 5 & *(black!10) 6 &*(black!10)7 & 8 & 9 & 10 
\end{ytableau}
$$
only $4$ values are common to both rows of the bulk, and only one column has values of those in the two rows. For $\sigma, \rho$ to swap top and bottom in some columns, as mentioned in the previous paragraph, the columns must have values from the set of common values between the two rows of the bulk. Thus, we focus on the restricted set of columns with common values in the bulk, for example:
$$
\ytableausetup
{boxsize=1.15em}
\ytableausetup
{aligntableaux=center}
\begin{ytableau}
1 & 2 & 3 & *(black!10) 8 &*(black!20) 4 & *(black!10) 7 & 6 & *(black!10) 5 & 10 & 9 &\none &\none \\
\none~ & \none &\none & *(black!10)7 &  2 & *(black!10)8 & *(black!20) 4 & *(black!10) 5 &  6 & 7 & 8 & 9 & 10 
\end{ytableau}
$$
Here, $\{5,7,8\}$ are common values that are in shared columns, whereas the value $4$ is common to both rows in the bulk but is paired column-wise with non-shared values. Hence, the same monomial can be produced by two permutations when they swap precisely the common values in the bulk that are in the same columns. 

Now, the set of common values with shared columns happens at the second index $\{k_1,\dots,k_t\}\subseteq H$, and one can isolate those columns in a rectangular block, e.g., 
$$
\ytableausetup
{boxsize=1.15em}
\ytableausetup
{aligntableaux=center}
\begin{ytableau}
 8 &  7  &  5  \\
7 & 8  &  5
\end{ytableau}
$$
Then those columns of a $z$-block can always be thought of as a set of $t$ values out of $[n-1]$ which are ordered in the top row and perhaps permuted in the bottom row. If one thinks of the top as fixed and a relabel of $(1,2,3,\dots,t)$, one has a permutation of $S_t$ in the bottom row, which always decomposes in a product of disjoint cycles. After accounting for the permutations of the top row, these cycles correspond precisely to the $z$-block cycles as we had defined them in Section \ref{sec:main}. Therefore, for a swap of top and bottom rows to produce the same monomial, the swap can happen only if it exchanges all the columns belonging to a cycle at the same time.

It is crucial for our counting that the swapped cases do not cancel each other due to the signature $(-1)^\sigma$: indeed, to perform a swap, the common values in shared columns must change order both in the top row and bottom row, but the change requires the same number of transpositions in both rows, so the overall number of transpositions that change $\sigma$ to an equivalent permutation $\rho$ that yields the same monomial is an even number, hence having the same signature. Therefore, no cancelations happen and we just have to count cases.

Any monomial in our last expression of $P_n(x)$, for a fixed $(I,J)$ and $\sigma$, may then have cycles in its corresponding $z$-block, and any swap of each of those cycles by other permutation $\rho$, that is equal to $\sigma$ everywhere except at the swapped positions, produces the same monomial. Conversely, any monomial that is equivalent to the former must be so because of a sequence of swaps in its permutation, with the indices $I\backslash H, (J-1)\backslash H$ and $H$ fixed. Thus, to get the coefficient in the Bombieri norm, we must count first how many cycles its $z$-block has, say $\lambda_1$ 1-cycles, $\lambda_2$ 2-cycles, etc., so its cycle profile is $\underline{\lambda}=(\lambda_1,\dots,\lambda_h)$, having a total of $|\ulmbd|$ cycles. There are in consequence $2^{\lambda_2+\cdots+\lambda_h}$ possible swapped permutations yielding the same monomial, corresponding to the $2$ swap states for each cycle longer than 1. Hence, the Bombieri norm contribution of this monomial is
$$
|P_{\underline{\alpha}}|^2\frac{\alpha_1!\cdots\alpha_N!}{D!} = (\beta(I,J)2^{\lambda_2+\cdots+\lambda_h})^2 \frac{2!^{\lambda_1}}{(2n-2)!}
$$
It remains to count how many different monomials there are with this specific weight.
For fixed $h$ and $(I,J)$ this is exactly accounted for by the length of the $z$-blocks. Indeed, the length of a $z$-block $L_{\ulmbd}(\BB{n-1}{h})$, for a given cyclic profile $\ulmbd$, counts how many different labelings (which correspond to permutations in $S_{n-1}\times S_{n-1}$ for the top row and bottom row) there are which have precisely those many cycles of the types specified by $\ulmbd$. Since we are considering as equivalent any swapped cycles, we must divide by $2$ for any cycle present, and so 
$$
\frac{L_{\ulmbd}(\BB{n-1}{h})}{2^{\lambda_2+\cdots+\lambda_h}}
$$
is the number of different monomials with the same $z$-block structure and the same cyclic profile $\ulmbd$. If we sum the Bombieri norm contribution for all feasible cyclic profiles, a set which we had called $\Lambda(n-1,h)$, we are left with:
\begin{align}\label{eq:BombieriNormPmid}
||P_n(x)||^2_B & = \sum_{h=1}^{n-1} \sum_{\substack{(I,J)\in\mathcal{P}_{2n-2}\\ |I|=|J|=n-1\\ |I\cap(J-1)|=h}}\;\;\sum_{\ulmbd\in\Lambda(n-1,h)}(\beta(I,J)2^{\lambda_2+\cdots+\lambda_h})^2 \frac{2!^{\lambda_1}}{(2n-2)!}\frac{L_{\ulmbd}(\BB{n-1}{h})}{2^{\lambda_2+\cdots+\lambda_h}} \nonumber \\
& =  \sum_{h=1}^{n-1} \sum_{\substack{(I,J)\in\mathcal{P}^{(h)}_{2n-2}}} \!\!\!\frac{\beta(I,J)^2 }{(2n-2)!}\sum_{\ulmbd\in\Lambda(n-1,h)}2^{|\ulmbd|} L_{\ulmbd}(\BB{n-1}{h})\nonumber
\end{align} 

Then using Equation \eqref{eq:BasuEq}, we at last obtain
\begin{equation}\label{eq:BombieriNormPmid}
	C_n = \frac{(2n-3)^{2n-2}}{(n-1)!n!}\prod_{k=0}^{2n-3}\binom{2n-3}{k}^{-1}  \sum_{h=1}^{n-1} \sum_{\substack{(I,J)\in\mathcal{P}^{(h)}_{2n-2}}} \!\!\!\beta(I,J)^2 \!\!\! \sum_{\ulmbd\in\Lambda(n-1,h)}2^{|\ulmbd|} L_{\ulmbd}(\BB{n-1}{h}).
\end{equation}

Now we focus on the binomial coefficients of the first factor and of $\beta(I,J)$. First, let us recall that
$$
\binom{n}{k-1}=\frac{k}{n-k+1}\binom{n}{k}, \text{ for } k\geq 1.
$$
Then we have
$$
\beta(I,J)^2 = \prod_{i\in I}\binom{2n-4}{i-1}\prod_{j\in J}\binom{2n-4}{j-2} = \prod_{i\in I}\binom{2n-4}{i-1}\prod_{j\in J\backslash\{2n-2\}}\binom{2n-4}{j-1}\frac{j-1}{2n-2-j},
$$
since $I\cup J=[2n-2],I\cap J=\emptyset, 1\in I, 2n-2\in J$,
\begin{align*}
\beta(I,J)^2 & = \prod_{k=1}^{2n-3}\binom{2n-4}{k-1}\prod_{j\in J\backslash\{2n-2\}}\frac{j-1}{2n-2-j} = \prod_{k=0}^{2n-4}\binom{2n-3-1}{k}\prod_{j\in J\backslash\{2n-2\}}\frac{j-1}{2n-2-j} \\
& = \prod_{k=0}^{2n-4}\binom{2n-3}{k}\frac{2n-3-k}{2n-3}\prod_{j\in J\backslash\{2n-2\}}\frac{j-1}{2n-2-j}.
\end{align*}
Putting together this with the first constant factors of \eqref{eq:BombieriNormPmid}, we get the terms of $W_{n,h}$ of Equation \eqref{eq:coeffW}:
\begin{align*}
 \frac{(2n-3)^{2n-2}}{(n-1)!n!}\prod_{k=0}^{2n-3}\binom{2n-3}{k}^{-1} \beta(I,J)^2 & =\frac{(2n-3)}{(n-1)!n!} \prod_{k=0}^{2n-4}(2n-3-k)\!\!\!\prod_{j\in J\backslash\{2n-2\}}\frac{j-1}{2n-2-j} \\
 & = \frac{1}{(n-1)!n!} \prod_{i\in I}(2n-2-i)\prod_{j\in J}(j-1).
\end{align*}

Finally, we must find the closed-form formula for the sum over cycle profiles. For this, recall that the unsigned Stirling numbers of the first kind ${ k\brack l}$ count the number of permutations of $S_k$ that contain exactly $l$ disjoint cycles. Then split up the sum according to how many cycles there are, counted by $l_1$, and how many columns they take from the bulk of the $z$-block, counted by $k_1$:
\begin{align}
 & \sum_{\ulmbd\in\Lambda(n-1,h)}2^{|\ulmbd|} L_{\ulmbd}(\BB{n-1}{h}) =L_{\underline{0}}(\BB{n-1}{h})+ \sum_{k_1=1}^h \sum_{l_1=1}^{k_1}\; 2^{l_1}\!\!\!\sum_{\substack{\ulmbd\in\Lambda(n-1,h) \\ |\ulmbd|=l_1 \\ \sum_{j=1}^{h}j\lambda_j=k_1}} L_{\ulmbd}(\BB{n-1}{h})\nonumber \\
 & = L_{\underline{0}}(\BB{n-1}{h})+  \sum_{k_1=1}^{h}\sum_{l_1=1}^{k_1} 2^{l_1} \binom{h}{k_1}\binom{n-1}{k_1}k_1!{\; k_1\;\brack\; l_1\;}\cdot L_{\underline{0}}(\rBB{n-1}{h}{k_1} )\nonumber \\
 & = L_{\underline{0}}(\BB{n-1}{h})+  \sum_{k_1=1}^{h} \binom{h}{k_1}\binom{n-1}{k_1}k_1!^2(k_1+1)\cdot L_{\underline{0}}(\rBB{n-1}{h}{k_1} ) \label{eq:Lrecursion}
\end{align}
The reason for the second equality is that first we must choose $k_1$ columns out of $h$ that will be used in cycles, then we choose $k_1$ values out of $n-1$ to fill those columns, and then we order them in the top row. But for each of those cases in the top row, the bottom row of these columns must correspond to an ordinary permutation of $S_{k_1}$ with $l_1$ disjoint cycles, whose count is given by the Stirling number. Then, we multiply by the length of the zero profile of the residual block, $\rBB{n-1}{h}{k_1}$, since it counts how many labelings are possible in the rest of boxes, without forming any cycles, after removing $k_1$ columns. By the following properties of Stirling numbers, $ {\; k_1\;\brack\; 0\;}=0$, the third equality follows:
$$
\sum_{l_1=0}^{k_1} 2^{l_1} {\; k_1\;\brack\; l_1\;} = \sum_{l_1=0}^{k_1}\sum_{m=0}^{l_1} \binom{l_1}{m}{\; k_1\;\brack\; l_1\;} =  \sum_{m=0}^{k_1}\sum_{l_1=m}^{k_1} \binom{l_1}{m}{\; k_1\;\brack\; l_1\;} =  \sum_{m=0}^{k_1} {\; k_1+1\;\brack\; m+1\;} = (k_1+1)!.
$$
Similarly,
$$
\sum_{l=0}^{k} {\; k\;\brack\; l\;} = k!.
$$
The zero cycle length is related to the other terms by Equation \eqref{eq:zerocycles}, so we obtain:
\begin{align}
 & \sum_{\ulmbd\in\Lambda(n-1,h)}2^{|\ulmbd|} L_{\ulmbd}(\BB{n-1}{h}) =(n-1)!^2+  \sum_{k_1=1}^{h} \binom{h}{k_1}\binom{n-1}{k_1}k_1!^2(k_1)\cdot L_{\underline{0}}(\rBB{n-1}{h}{k_1} ) \label{eq:Lrecursion}
\end{align}
The length of the residual block with zero cycles has a recursive expansion given by Equation \eqref{eq:zerocycles} again, which using the previous reasoning is
\begin{align*}
& L_{\underline{0}}\left[ \rBB{n-1}{h}{k_1}  \right] = (n-1-k_1)!^2\; -\!\!\! \sum_{\substack{\underline{\mu}\in\Lambda(n-1-k_1,h-k_1)\\ \underline{\mu}\neq 0 }} L_{\underline{\mu}}\left[ \BB{n-1-k_1}{h-k_1} \right] \\
& = (n-1-k_1)!^2\; - \sum_{k_2=1}^{h-k_1} \binom{h-k_1}{k_2}\binom{n-1-k_1}{k_2}k_2!^2\cdot L_{\underline{0}}(\rBB{n-1-k_1}{h-k_1}{k_2} )
\end{align*}
The recursion is involved but can be kept track of by defining:
\begin{equation}
	a_i = (n-1-\sum_{j=1}^i k_j)!^2,\quad\quad b_i = \binom{h-\sum_{j=1}^{i-1} k_j}{k_i}\binom{n-1-\sum_{j=1}^{i-1} k_j}{k_i}k_i!^2,
\end{equation}
so that the factors in \eqref{eq:Lrecursion} expand as
\begin{align}
& \sum_{\ulmbd\in\Lambda(n-1,h)}2^{|\ulmbd|} L_{\ulmbd}(\BB{n-1}{h}) = (n-1)!^2+ \sum_{k_1=1}^{h} k_1b_1\left[ a_1 - \sum_{k_2=1}^{h-k_1} b_2 \left[ a_2 -  \sum_{k_3=1}^{h-k_1-k_2} b_3\cdots  \right] \right] \nonumber \\
& = (n-1)!^2+ \sum_{k_1=1}^{h} k_1\left[ a_1b_1 - \sum_{k_2=1}^{h-k_1} a_2b_1b_2 + \sum_{k_2=1}^{h-k_1}\sum_{k_3=1}^{h-k_1-k_2} a_3b_1b_2b_3 -\cdots \right] \nonumber \\
& = (n-1)!^2+ \sum_{k_1=1}^{h} k_1\left[ a_1b_1 +\sum_{p=2}^h\sum_{k_2=1}^{h-k_1}\cdots\!\!\!\sum_{k_p=1}^{h-\sum_{j=1}^{p-1}k_j}(-1)^{p+1} a_p\prod_{j=1}^{p}b_j \right], \label{eq:lastsum}
\end{align}
because the process terminates when all the columns of the bulk are removed, so the recursion depth is $h$. Now, the factorials in the product telescope, simplifying to
$$
a_p\prod_{j=1}^{p}b_j =h!(n-1)!\frac{(n-1-\sum_{j=1}^p k_j)!}{(h-\sum_{j=1}^p k_j)!}.
$$
In equality \eqref{eq:lastsum} for every $p$ there are $p-1$ sums over $k_2,\dots,k_p$ for which only the total value $t=\sum_{j=2}^p k_j$ is of relevance. We notice that only one sum over the possible values of $t=p-1,\dots,h-k_1$ needs to be carried out as long as we multiply by the number of compositions of $t$ in exactly $p-1$ parts, which accounts for the different values of $k_2,\dots,k_p$ yielding the same $t$. This number of compositions is $\binom{t-1}{p-2}$. Overall, the final result is
\begin{align*}
& \sum_{\ulmbd\in\Lambda(n-1,h)}2^{|\ulmbd|} L_{\ulmbd}(\BB{n-1}{h}) = (n-1)!^2 +\sum_{k=1}^{h} k\left[ (n-1-k)!^2 \binom{h}{k}\binom{n-1}{k}k!^2 + \right. \\
& \hspace{5cm}\left. + h!(n-1)!\sum_{p=2}^h\sum_{t=p-1}^{h-k} (-1)^{p+1}\binom{t-1}{p-2} \frac{(n-1-k-t)!}{(h-k-t)!} \right].
\end{align*}
This long expression massively simplifies to Equation \eqref{eq:lengthZB}. Taking common factors it can be written in terms of only binomial coefficients as
\begin{align*}
\sum_{\underline{\lambda}\in\Lambda(n-1,h)} 2^{|\underline{\lambda}|}L_{\underline{\lambda}}\left[ \BB{n-1}{h} \right] & = (n-1)!^2\left[ 1+\sum_{k=1}^h k\binom{h}{k}\binom{n-1}{k}^{-1}  \right. \\ 
& \hspace{0cm}\times\left.\left[ 1+ \sum_{p=2}^h\sum_{t=p-1}^{h-k}(-1)^{p+1}\binom{t-1}{p-2}\binom{h-k}{t}\binom{n-1-k}{t}^{-1}
		\right] \right]. \nonumber
\end{align*}
By exchanging the two sums, the inner square brackets turn out to be:
$$
 1+ \sum_{t=1}^{h-k}\sum_{p=0}^{\min(t-1,h-2)}(-1)^{p+1}\binom{t-1}{p}\binom{h-k}{t}\binom{n-1-k}{t}^{-1} = \begin{cases}
 	1, \text{ if } k=h, \\ \frac{n-1-h}{n-1-k}, \text{ if } 1\leq k\leq h-1,
 \end{cases}
$$
because the alternating sum over $p$ cancels except for a term when $h-k\leq h-2$, (here we specifically separate the cases to avoid division by zero when $h=n-1, k=h$, but assume the value $1$ in that case in what follows). Now, recall the binomial coefficient properties $\binom{m}{k}=\frac{m-k+1}{k}\binom{m}{k-1}$ and $\binom{h-1}{k-1}=\binom{h}{k}-\binom{h-1}{k}$, so that we can rewrite everything to apply the following result from \cite[Eq. 5.33]{graham1994concrete}
\begin{equation}
	\sum_{j=0}^{m}\binom{m}{j}\binom{r}{j}^{-1}=\frac{r+1}{r+1-m},\text{ for } m\geq 0\text{ and }r\notin\{0,1,\dots,m-1\}.
\end{equation}
With all this one obtains the final result:
\begin{align*}
& \sum_{\underline{\lambda}\in\Lambda(n-1,h)} 2^{|\underline{\lambda}|}L_{\underline{\lambda}}\left[ \BB{n-1}{h} \right] = (n-1)!^2\left[ 1+\sum_{k=1}^h k\binom{h}{k}\binom{n-1}{k}^{-1} \frac{n-1-h}{n-1-k} \right]  \\
& = (n-1)!^2\left[ 1 + \sum_{k=1}^{h}\frac{k}{k+1}\binom{h}{k}\binom{n-1}{n+1}^{-1}(n-1-h) \right] \\
& = (n-1)!^2\left[ 1 + \frac{h(n-1-h)}{n-1}\sum_{k=1}^h\binom{h-1}{k-1}\binom{n-2}{k}^{-1} \right] \\
& = (n-1)!^2\left[ 1 + \frac{h(n-1-h)}{n-1}\left[ \sum_{k=0}^h\binom{h}{k}\binom{n-2}{k}^{-1}- \sum_{k=0}^{h-1}\binom{h-1}{k}\binom{n-2}{k}^{-1}\right] \right] \\
& =  (n-1)!^2\left( 1 +\frac{h}{n-h} \right).
\end{align*}


\section{Concluding remarks}

In the present work we have established two formulas for the number of lines on a generic hypersurface of degree $2n-3$ in complex projective space $\mathbb{CP}^n$. The first is a closed-form formula in terms of the Catalan numbers and elementary symmetric polynomials very similar to Harris' expression, as it is derived from Schubert calculus using the splitting principle with Chern classes. This leads to a linear difference recurrence expressing $C_n$ as an unbounded non-homogeneous recursion relation with variable coefficients, which in turn yields a simple generating function for the sequence of certain linear combinations of the $C_n$. It remains unknown whether a closed-form formula for the generating formal power series of the sequence $C_n$ itself exists at all. Our first formula, or Harris', may be used to study congruence relations that were not possible using the van der Waerden-Zagier formula by taking advantage of its relation to the Catalan numbers instead of the Stirling numbers. 
On the other hand, the second result of the paper is a combinatorial expansion for $C_n$ in terms of certain set compositions and counting of cycle labelings in blocks. These account for the monomials in the computation of the Bombieri norm of certain polynomial determinants arising from random algebraic geometry, where a major result by Basu-Lerario-Lundberg-Peterson relates this to the number of lines on generic hypersurfaces. As examples, we have computed the 27 lines on a cubic surface and 2875 lines on a quintic threefold by exploring the combinatorics of the Bombieri norms of these cases. As applications, we have reobtained the odd parity of the sequence and a leading order asymptotic upper bound. Future lines of work include investigating whether it is possible to obtain non-variable recursion relations or generating functions of the $C_n$ themselves, or of the factors in the Bombieri norm formula, and studying congruence properties of the sequence taking advantage of the Catalan numbers properties. It would be also of much interest to extend any of the methods presented here to the real case, where the expectation value of the polynomial determinant in the BLLP formula is not interpreted as a Bombieri norm. Finally, a general formula can be obtained for the number of lines on a generic complete intersection of arbitrary fixed codimension, see Appendix \ref{sec:append}, so in future work we would like to establish congruence relations and asymptotic expansions for this general case.



\appendix
\section{The number of lines on complete intersections}\label{sec:append}

A. S. Libgober obtains in \cite[Th. 3]{Libgober1973} a formula for the number of lines on generic complete intersections of hypersurfaces in $\mathbb{CP}^{n}$ for arbitrary fixed codimension, when the corresponding degrees and ambient dimension match to yield a finite solution. His method consists of representing the class of the Fano variety of lines as a combination of Schubert cycles without using Chern classes. In this appendix, we extend the Schubert calculus computation from Section \ref{sec:Schubert}, using the splitting principle for a Chern class representation of the Fano variety, to obtain an alternative formula.

By \cite[Prop. 6.4]{32642016}, we have that the class, in the Chow ring of the corresponding Grassmannian, of the Fano scheme $F_k(X)$ of $k$-planes on a hypersurface $X\subset\mathbb{CP}^n$ of degree $d$ is
\begin{equation}
	[F_k(X)] = c_{\binom{k+d}{k}}(\Sym^d S^\ast) \in A(\mathbb{G}r(k,n)).
\end{equation}
To count the number of lines on a generic complete intersection $X=\bigcap_{i=1}^k Z_i\subset\mathbb{CP}^n$, it is sufficient to find the classes $F_1(Z_i)$ of the lines on each hypersurface $Z_i$ and multiply them in $A(\mathbb{G}r(1,n))$, i.e., we need
\begin{equation}\label{eq:productChern}
 \prod_{i=1}^k [F_1(Z_i)] = \prod_{i=1}^k c_{d_i+1}(\Sym^{d_i} S^\ast).
\end{equation}
Since the dimension of the Grassmannian is $2(n-1)$, this will yield a finite number of lines on a complete intersection when
\begin{equation}
	2(n-1) = \sum_{i=1}^k (d_i+1),
\end{equation}
which restricts the possible tuples of degrees for a fixed codimension.

The computation of $c_{d_i+1}(\Sym^{d_i} S^\ast)$ is exactly the same as in Section \ref{sec:Schubert} just taking into account some extra factors when the degree $d_i$ is even. Let us denote by $\{\cdot\}$ terms that must be present only in that case, i.e., when $d_i\equiv 0\mod 2$, and define $D_i=\lfloor \frac{d_i-1}{2}\rfloor$:
\begin{align*}
& c_{d_i+1}(\Sym^{d_i} S^\ast) = \prod_{j=0}^{d_i}[(d_i-j)\alpha + j\beta] = d_i^2\alpha\beta\prod_{j=1}^{d_i-1}[(d_i-j)\alpha + j\beta] \\
& = d_i^2\alpha\beta\left\{\frac{d_i}{2}(\alpha+\beta)\right\} \prod_{j=1}^{D_i}[(d_i-j)\alpha +j\beta][j\alpha +(d_i-j)\beta]
\end{align*}
\begin{align*}
& = d_i^2\sigma_{1,1}\left\{\frac{d_i}{2}\sigma_1\right\} \prod_{j=1}^{D_i}[ (d_i-j)j(\sigma_1^2-2\sigma_{1,1}) + ((d_i-j)^2+j^2)\sigma_{1,1} ] \\
& = d_i^2\sigma_{1,1}\left\{\frac{d_i}{2}\sigma_1\right\} \sum_{l=0}^{D_i}\sum_{\substack{(I,J)\in\mathcal{P}_{D_i}\\ |I|=l }}\prod_{r\in I}(d_i-r)r\prod_{s\in J}((d_i-s)^2+s^2)\sum_{m=0}^l \binom{l}{m}(-2)^{l-m}\sigma_1^{2m}\sigma_{1,1}^{D_i-m}
\end{align*}
Applying the same simplification as in Section \ref{sec:Schubert} to the factors summing over compositions, reordering the sums, and applying Equation \eqref{eq:twobinomials}, we arrive at
\begin{align*}
c_{d_i+1}(\Sym^{d_i} S^\ast) & = d_i^2\left\{\frac{d_i}{2}\right\}\sum_{l=0}^{D_i}\sum_{t=l}^{D_i}d_i^{2D_i-2t}\binom{t}{l}e_t(\Xi(d_i))\sum_{m=0}^l \binom{l}{m}(-2)^{t-m}\sigma_1^{2m+\{1\}}\sigma_{1,1}^{D_i-m+1} \\
& = d_i^{2D_i+2}\left\{\frac{d_i}{2}\right\} \sum_{t=0}^{D_i} d_i^{-2t}e_t(\Xi(d_i))\sum_{m=0}^t\binom{t}{m}(-4)^{t-m}\sigma_1^{2m+\{1\}}\sigma_{1,1}^{D_i-m+1}
\end{align*}
where we define the set of integers $\Xi (d_i)=\{(d_i-j)j\}_{j=1}^{D_i}$.

In Equation \eqref{eq:productChern}, the final product of the Schubert cycles, multiplying all factors $\sigma_1^{2m_i+\{1\}}\sigma_{1,1}^{D_i-m_i+1}$, will yield the top Schubert cycle class with a factor given by the Catalan number $K_{M(\underline{m})}$, with $M(\underline{m})=\sum_{i=1}^km_i+\{1\}$, by the same type of argument that established Equation \eqref{eq:CatalanSchubert}. Here $\{1\}$ is understood to be $1$ if and only if any of the $d_i\equiv 0\mod 2$, so that it is $0$ if and only if all the $d_i$ are odd integers. With this notation, we obtain that the expected number of lines, $C(\underline{d})$, on a complete intersection of hypersurfaces of degrees $\underline{d}=(d_1,\dots,d_k)$ in $\mathbb{CP}^N$, with $N=1+\frac{1}{2}\sum_{i=1}^k(d_i+1)$, is
\begin{equation*}
\boxed{
	C(\underline{d}) = \left[ \prod_{i=1}^k d_i^{2D_i+2}\left\{\frac{d_i}{2}\right\} \right]\sum_{t_1=0}^{D_1}\sum_{m_1=0}^{t_1}\cdots\sum_{t_k=0}^{D_k}\sum_{m_k=0}^{t_k} \left[ \prod_{i=1}^k d_i^{-2t_i}e_{t_i}(\Xi(d_i))\binom{t_i}{m_i}(-4)^{t_i-m_i} \right] K_{M(\underline{m})}
}
\end{equation*}

In Table \ref{tab:my-table} we give the first numbers of lines on a complete intersection of codimension $2$.

\Small
\begin{table}[]
\begin{tabular}{@{}l|lllllllll@{}}
  & 1 & 2  & 3    & 4      & 5        & 6          & 7             & 8               & 9                  \\ \midrule
1 & 1 &    & 27   &        & 2875     &            & 698005        &                 & 305093061          \\
2 &   & 16 &      & 1280   &          & 241920     &               & 86073344        &                    \\
3 &   &    & 1053 &        & 136125   &            & 37022391      &                 & 17425851975        \\
4 &   &    &      & 111616 &          & 22546944   &               & 8420392960      &                    \\
5 &   &    &      &        & 19188125 &            & 5503443575    &                 & 2688242739975      \\
6 &   &    &      &        &          & 4782986496 &               & 1850898677760   &                    \\
7 &   &    &      &        &          &            & 1636365422153 &                 & 820972669886649    \\
8 &   &    &      &        &          &            &               & 735546407124992 &                    \\
9 &   &    &      &        &          &            &               &                 & 420472391422517289
\end{tabular}
\caption{Number of lines on a codimension $2$ complete intersection in $\mathbb{CP}^{2+(d_1+d_2)/2}$. Every pair of row and column fixes the degrees $(d_1,d_2)$ of the hypersurfaces. (Note the table is a symmetric matrix but only the upper triangular part is represented).}
\label{tab:my-table}
\end{table}
\normalsize



\printbibliography

\end{document}